\documentclass[11pt]{article}
\usepackage[margin=1.15in]{geometry}
\usepackage{placeins}
\usepackage{amsmath,amssymb,amsthm}
\usepackage{tikz}
\usepackage{longtable}
\usepackage{colortbl}
\usepackage{afterpage}
\usepackage[hidelinks]{hyperref}

\newtheorem{theorem}{Theorem}

\newcommand{\dens}{\mathrm{d}}
\title{Identifying codes of Archimedean grids}

\author{Rudini M. Sampaio\\[2pt]
\small Departamento de Computa\c{c}\~ao, Universidade Federal do Cear\'a, Fortaleza, Brazil\\
\small\texttt{rudini@dc.ufc.br}}

\date{}

\begin{document}
\maketitle

\begin{abstract}
An identifying code of a graph is a dominating set $C$ such that the sets $N[v]\cap C$ are pairwise distinct. The minimum density of an identifying code has been studied extensively for several infinite grids. It is known to be $1/4$ for the triangular grid and $7/20$ for the square grid, whereas for the hexagonal grid the problem is still open, between $5/12$ and $53/126$. These three grids are the regular members of the family of the eleven Archimedean grids. In this paper, we start a systematic study of identifying codes in this family. We determine the minimum density for five of the nine Archimedean grids for which it was not known: it is $2/3$ for the truncated hexagonal grid $3.12.12$, $3/7$ for the truncated square grid $4.8.8$, and $1/3$ for the trihexagonal (kagome) grid $3.6.3.6$, the elongated triangular grid $3^3.4^2$ and the snub hexagonal grid $3^4.6$. All lower bounds are proved by the discharging method. For the three remaining non-regular Archimedean grids, we also obtain some bounds: the minimum density lies between $3/7$ and $13/30$ for the truncated trihexagonal grid $4.6.12$, between $1/3$ and $11/30$ for the rhombitrihexagonal grid $3.4.6.4$, and between $2/7$ and $9/28$ for the snub square grid $3^2.4.3.4$. As a by-product, we prove that every identifying code of a cubic graph with the slow growth property in which every vertex lies on exactly one triangle (resp. $4$-cycle) has density at least $2/3$ (resp. $3/7$).
\end{abstract}

\medskip
\noindent\textbf{Keywords:} identifying codes, density, Archimedean grids, discharging method.


\section{Introduction}

Let $G=(V,E)$ be a graph and let $N[v]$ denote the closed neighborhood of a vertex $v$. A set $C\subseteq V$ is an \emph{identifying code} (\emph{idcode} for short) of $G$ if the sets $I(v)=N[v]\cap C$ are nonempty and pairwise distinct for $v\in V$. Identifying codes were introduced by Karpovsky, Chakrabarty and Levitin~\cite{KCL98}, motivated by fault diagnosis in multiprocessor networks, and have been studied extensively since; see the survey~\cite{HJL24} and the references therein.

For infinite grids, the relevant parameter is the minimum density of an identifying code. Among the three regular planar grids, it is known exactly for the triangular grid~\cite{KCL98} (equal to $1/4$), and for the square grid~\cite{BHL05,CGHLMPZ99} (equal to $7/20$). The hexagonal grid is a well-known open case: the best known lower bound is $5/12$~\cite{CuY13} (see also~\cite{CHLZ00,CY09,Sto14}) and the best known upper bound is $53/126$~\cite{ST24}. The minimum density is also known for the king grid, where it equals $2/9$~\cite{CHL02,CHLZ01}, and grids with finitely many rows were studied in~\cite{DHS17,DHS18,SSW24}.

The triangular, square and hexagonal grids are the three regular members of the family of the eleven \emph{Archimedean grids}, the graphs of the edge-to-edge tilings of the plane by regular polygons whose symmetry group acts transitively on the vertices (Section~2). Domination-type parameters of the Archimedean grids have been studied, for instance perfect domination in~\cite{ZWM22}. However, to the best of our knowledge, among the eight non-regular Archimedean grids the only one for which identifying codes have been considered is the snub hexagonal grid $3^4.6$: an undergraduate thesis~\cite{Alb11} proved that its minimum density lies between $3/10$ and $1/3$. Thus the minimum density of an identifying code was known for only two of the eleven Archimedean grids.

In this paper we start a systematic study of identifying codes of the Archimedean grids: we determine the minimum density for five of the nine remaining ones, and we give bounds for the other three.

\begin{theorem}\label{thm:main}
The minimum density of an identifying code is
\begin{itemize}
\item[(a)] $1/3$ for the trihexagonal (kagome) grid $3.6.3.6$;
\item[(b)] $2/3$ for the truncated hexagonal grid $3.12.12$;
\item[(c)] $3/7$ for the truncated square grid $4.8.8$;
\item[(d)] $1/3$ for the elongated triangular grid $3^3.4^2$;
\item[(e)] $1/3$ for the snub hexagonal grid $3^4.6$;
\item[(f)] between $3/7$ and $13/30$ for the truncated trihexagonal grid $4.6.12$;
\item[(g)] between $1/3$ and $11/30$ for the rhombitrihexagonal grid $3.4.6.4$;
\item[(h)] between $2/7$ and $9/28$ for the snub square grid $3^2.4.3.4$.
\end{itemize}
\end{theorem}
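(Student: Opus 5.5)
Each of the eight parts splits into an upper bound and a lower bound, and I will handle them separately.

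\textbf{Upper bounds.} For each grid I will give an explicit periodic code $C$, described on a fundamental domain of a translation sublattice, and compute its density directly. The periods are dictated by the target densities: for the truncated square grid $4.8.8$ a domain containing a multiple of $7$ vertices, and for $4.6.12$ and $3.4.6.4$ domains with $60$ and $30$ vertices respectively. Verifying that $C$ is identifying is a finite check on one period plus a boundary layer of radius $2$. We need every $I(v)$ nonempty, and $I(u)\neq I(v)$ only for $d(u,v)\le 2$, since otherwise the closed neighbourhoods are disjoint. This check is routine and can be done by a small computer enumeration, which the paper can report in tables.

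\textbf{Lower bounds.} I will use the discharging method throughout. Each codeword starts with charge $1$ and each non-codeword with charge $0$. The aim is to redistribute charge so that every vertex ends with at least the target density $\delta$. This requires each codeword to hand out at most $1-\delta$ in total, and the transfers must be local (bounded distance), so that the slow growth property of the grid turns the local inequality into a density bound.

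\emph{Rule design.} A non-codeword $v$ with $|I(v)|=k$ receives $\delta/k$ from each codeword in $I(v)$. Then one must show that a codeword $c$ together with its neighbourhood cannot be too poor. The identifying condition forbids configurations such as two non-codewords with the same single codeword neighbour, or a codeword with $I(c)=\{c\}$ next to a vertex whose $I$ equals $\{c\}$. A codeword that would give away too much is therefore forced to have other codewords nearby, and secondary rules let those codewords subsidize it.

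\emph{The cubic cases.} For $3.12.12$ (every vertex on exactly one triangle) and $4.8.8$ (exactly one $4$-cycle), I would prove the stated by-product for general cubic graphs with slow growth. The argument analyses $C$ restricted to each triangle or $4$-cycle, treated as a unit. For a triangle $T$, at least two vertices of $T$ should be in $C$ on average. If $|T\cap C|\le 1$, the identifying condition forces codewords among the external neighbours, and charge flows from adjacent triangles that have three codewords. For $4$-cycles the same idea gives $3/7$, with units of $7$ vertices: a $4$-cycle plus shared neighbours.

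\emph{The other cases.} For the grids with many triangles ($3.6.3.6$, $3^3.4^2$, $3^4.6$, $3^2.4.3.4$) and for $3.4.6.4$ and $4.6.12$, I would use the same vertex-based rules with $\delta=1/3$, $2/7$, or $3/7$. The case analysis follows the values of $|I(c)|$ for each codeword $c$.

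\textbf{Main obstacle.} The hardest step is the case analysis certifying that every codeword retains charge at least $\delta$ when $\delta$ is tight. This is most delicate for $4.8.8$, and for $3^4.6$, where previous work reached only $3/10$. There, isolated codewords with $I(c)=\{c\}$ surrounded by non-codewords having $|I|=1$ give away the most charge. I would try first to handle these by a second-round rule pulling charge from codewords at distance $2$, using that distinct singletons $I(v)=\{c\}$ can occur at most once per $c$. If that fails, I would switch to averaging over larger units such as faces.
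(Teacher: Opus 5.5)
Your proposal is a plan rather than a proof. Outside the lower bounds of (a), (g) and (h) it omits exactly the parts that carry the content, and the one concrete mechanism it does describe fails.

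Your basic rule does settle those three lower bounds. A codeword $c$ gives at most $\delta\Delta/2$, because a neighbour with $I(v)=\{c\}$ forces a codeword neighbour of $c$, which receives nothing. So with $\delta=2/(\Delta+2)$ every vertex ends with at least $\delta$. This gives $1/3$ for $3.6.3.6$ and $3.4.6.4$, and $2/7$ for $3^2.4.3.4$. The same rule gives only $2/5$ for the cubic grids and $2/7$ for $3^3.4^2$ and $3^4.6$.

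For $3.12.12$, subsidizing a triangle with $|T\cap C|\le 1$ from adjacent triangles with three codewords cannot work. The two codewords forced among $\beta(u),\beta(v),\beta(w)$ may lie in triangles with exactly two codewords, whose bridge neighbours are then forced in turn. Continuing this along a semi-infinite row of triangles yields idcodes in which one triangle has a single codeword and no triangle has three. The missing idea is to count the bridge neighbours of $T$ rather than its vertices. Since $N[u]\triangle N[v]=\{\beta(u),\beta(v)\}$ for $u,v\in T$, at least two of $\beta(u),\beta(v),\beta(w)$ are codewords. Every codeword is the bridge neighbour of exactly one vertex, so letting each codeword $c$ send $1/3$ to each vertex of $T(\beta(c))$ gives every vertex at least $2/3$.

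For $4.8.8$ your $7$-vertex units do not correspond to anything: the squares partition the vertex set, and $3/7$ means $12/7$ codewords per square on average. You also state no rule. The paper gives codewords charge $7$. A codeword sends $3$ or $2$ to a non-codeword bridge neighbour $b$, according to whether $b$ has no codeword square neighbour or has one, and square neighbours supply the rest in equal shares. A case analysis on the square of each codeword then shows that it keeps at least $3$. You also miss that $4.6.12=\operatorname{tr}(3.6.3.6)$ has every vertex on exactly one $4$-cycle, which is what yields (f).

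For $3^3.4^2$ and $3^4.6$ you give neither secondary rules nor a case analysis. Moreover, the configuration you single out as worst is impossible: a non-codeword neighbour $v$ of $c$ with $|I(v)|=1$ has $I(v)=\{c\}$, which is excluded when $I(c)=\{c\}$. The binding cases under your rule are $I(c)=\{c\}$ with five neighbours having $|I|=2$, or one neighbour with $I(v)=\{c\}$, one codeword neighbour and three neighbours with $|I|=2$. Going from $2/7$ to $1/3$ requires rules, with initial charge $3$, in which a non-codeword takes its unit only from codewords in prescribed positions around it and certain codewords get $1/2$ back. This must be followed by a complete check, which for $3^4.6$ is a certificate over $N_2[u]$.

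Finally, no upper-bound code is exhibited. For (a), (b), (d) and (e) simple codes suffice:
\begin{itemize}
\item for $3.6.3.6$, every other vertex of each line of one parallel class;
\item for $3.12.12$, two consistently chosen vertices of every triangle;
\item for $3^3.4^2$, every third horizontal line;
\item for $3^4.6$, one colour class of hexagons.
\end{itemize}
By contrast, $4.8.8$ and (f)--(h) need specific periodic codes, found by computer search, whose identifying property must be verified.
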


In each case, the upper bound is given by an explicit periodic code, and the lower bound is proved by the discharging method, in the framework for infinite graphs with the slow growth property developed in~\cite{SSW24}. The lower bounds have very different levels of difficulty. For the kagome grid, the general lower bound $2/(\Delta+2)$ of~\cite{SSW24} (which comes from the simplest discharging rule, the share argument) already gives the exact value. For the truncated hexagonal grid, the general bound is only $2/5$, but the value $2/3$ follows from a very simple discharging rule, which also gives the lower bound $2/3$ for every cubic graph in which every vertex lies on exactly one triangle. For the truncated square grid, the proof is a short case analysis, which also gives the lower bound $3/7$ for every cubic graph in which every vertex lies on exactly one $4$-cycle. For the elongated triangular grid, the case analysis is longer, but still done by hand. Finally, for the snub hexagonal grid, the discharging rules and the case analysis were found and checked with the help of a computer, and the case analysis is given in full as a certificate whose rows can be checked by hand. A program that checks the certificate of Section~\ref{sec:snub} is available at~\cite{rudini26idc}.

The paper is organized as follows. In Section~2 we recall the Archimedean grids, the notion of density and the discharging method for infinite graphs. Theorem~\ref{thm:main} is the union of Theorems~\ref{thm:kagome}, \ref{thm:trhex}, \ref{thm:tsq}, \ref{thm:elong}, \ref{thm:snub} and~\ref{thm:remaining}, which are stated and proved in Sections~\ref{sec:kagome}, \ref{sec:trhex}, \ref{sec:tsq}, \ref{sec:elong}, \ref{sec:snub} and~\ref{sec:remaining}, respectively. For the three grids of Section~\ref{sec:remaining}, the upper bounds come from the best periodic codes we found with a computer search. Table~\ref{tab:archimedean} summarizes what is known about the eleven Archimedean grids.

\section{Preliminaries}

A tiling of the plane by polygons is \emph{edge-to-edge} if any two tiles are disjoint or meet in a common vertex or a common edge. An \emph{Archimedean tiling} is an edge-to-edge tiling of the plane by regular polygons whose symmetry group acts transitively on its vertices. In such a tiling, the cyclic sequence of the numbers of sides of the polygons around a vertex, its \emph{vertex configuration}, is the same at every vertex up to rotation and reflection; it is written as a sequence separated by dots, with exponents denoting repetition, so that $4.8.8$ means a square and two octagons and $3^3.4^2=3.3.3.4.4$ means three triangles followed by two squares. There are exactly eleven Archimedean tilings~\cite[Section~2.1]{GS87}, including the three regular tilings by triangles, squares and hexagons. The \emph{Archimedean grid} of an Archimedean tiling is the infinite graph formed by its vertices and edges. It is planar, vertex-transitive, and $\Delta$-regular, where $\Delta$ is the number of polygons around a vertex. The eleven Archimedean grids are listed in Table~\ref{tab:archimedean} and shown in Figure~\ref{fig:archimedean}, together with what is known about the minimum density of their idcodes.

All Archimedean tilings except the elongated triangular tiling $3^3.4^2$ can be obtained from the regular tilings by Wythoff's construction~\cite{Cox73}, which uses five operations: truncation, rectification, cantellation, omnitruncation and the snub operation. Truncation gives $3.12.12$ and $4.8.8$, rectification gives $3.6.3.6$, cantellation gives $3.4.6.4$, omnitruncation gives $4.6.12$ and $4.8.8$, and the snub operation gives $3^4.6$ and $3^2.4.3.4$.

Truncation, in particular, can be defined for every graph: the \emph{truncation} $\operatorname{tr}(H)$ of a graph $H$ is obtained by replacing every vertex $v$ of degree $d$ by a cycle of length $d$, each vertex of which receives one of the edges of $v$; when $H$ is a plane graph, the edges are taken in the cyclic order around $v$. Thus $\operatorname{tr}(H)$ is cubic, and every vertex of $\operatorname{tr}(H)$ lies on exactly one of these new cycles. For instance, $3.12.12=\operatorname{tr}(6^3)$, $4.8.8=\operatorname{tr}(4^4)$ and $4.6.12=\operatorname{tr}(3.6.3.6)$.

\begin{table}[t]
\centering
\small
\begin{tabular}{llcl}
\hline
\rowcolor{gray!25}Configuration & Name & $\Delta$ & Minimum density \\
\hline
$3^6$ & triangular & $6$ & $1/4$~\cite{KCL98} \\
$4^4$ & square & $4$ & $7/20$~\cite{CGHLMPZ99,BHL05} \\
$6^3$ & hexagonal & $3$ & $5/12$~\cite{CuY13} to $53/126$~\cite{ST24} \\
\hline
$3.6.3.6$ & trihexagonal (kagome) & $4$ & $1/3$ (Thm.~\ref{thm:kagome}) \\
$3.12.12$ & truncated hexagonal & $3$ & $2/3$ (Thm.~\ref{thm:trhex}) \\
$4.8.8$ & truncated square & $3$ & $3/7$ (Thm.~\ref{thm:tsq}) \\
$3^3.4^2$ & elongated triangular & $5$ & $1/3$ (Thm.~\ref{thm:elong}) \\
$3^4.6$ & snub hexagonal & $5$ & $1/3$ (Thm.~\ref{thm:snub}) \\
\hline
$4.6.12$ & truncated trihexagonal & $3$ & $3/7$ (Thm.~\ref{thm:4cycle}) to $13/30$ (Thm.~\ref{thm:remaining}) \\
$3.4.6.4$ & rhombitrihexagonal & $4$ & $1/3$~\cite{SSW24} to $11/30$ (Thm.~\ref{thm:remaining}) \\
$3^2.4.3.4$ & snub square & $5$ & $2/7$~\cite{SSW24} to $9/28$ (Thm.~\ref{thm:remaining}) \\
\hline
\end{tabular}
\caption{The eleven Archimedean grids, their degree $\Delta$, and what is known about the minimum density of an idcode.}
\label{tab:archimedean}
\end{table}

\begin{figure}[t]
\centering
%
\caption{The 11 Archimedean grids, labeled by their vertex configurations.}
\label{fig:archimedean}
\end{figure}

We define the density of a set of vertices of an infinite graph as in~\cite{SSW24}. For a vertex $w$, let $N_r[w]$ be the set of vertices at distance at most $r$ from $w$. The density of $C\subseteq V$ is $\dens(C)=\inf_{w}\dens_w(C)$, where $\dens_w(C)=\limsup_{r\to\infty}|C\cap N_r[w]|/|N_r[w]|$. In an arbitrary infinite graph this definition must be handled with care. For instance, Slater~\cite{Sla02} stated in 2002 that every locating-dominating set of a countably infinite $d$-regular graph has density at least $2/(d+3)$. As shown in~\cite{SSW24}, this is false for the infinite $3$-regular tree, which has a locating-dominating set of density at most $5/16<1/3$; in this tree, the value $\dens_w(C)$ may moreover depend on the vertex $w$. The property that makes density arguments valid is the following one.

A connected graph has the \emph{slow growth property}~\cite{SSW24} if it has a vertex $s$ such that
\[
\lim_{r\to\infty}\frac{|N_{r+1}[s]|}{|N_r[s]|}=1.
\]
 If this holds for one vertex $s$, then $\lim_{r\to\infty}|N_{r+t}[w]|/|N_r[w]|=1$ for every vertex $w$ and every integer $t$, and $\dens_w(C)=\dens_s(C)$ for every vertex $w$, so that the density of $C$ may be computed from an arbitrary vertex~\cite{SSW24}. Since $|N_r[w]|$ grows quadratically in $r$ in every Archimedean grid, all of them have the slow growth property.

A graph admits an idcode if and only if it is \emph{twin-free}, that is, $N[u]\neq N[v]$ for all distinct vertices $u,v$. It is easy to check that the eleven Archimedean grids are twin-free: two adjacent vertices have at most two common neighbors, and at most one in the cubic grids. The \emph{minimum density} of an idcode of a twin-free graph $G$ is the infimum of $\dens(C)$ over all idcodes $C$ of $G$; for the five grids whose minimum density we determine, it is attained by periodic codes.

All our lower bounds are proved by the \emph{discharging method}, which we explain now. In a \emph{charging phase}, a charge, which is a rational number, is assigned to every vertex; in a \emph{discharging phase}, charge is moved between vertices according to \emph{discharging rules}, which preserve the total charge. The rules are designed so that, at the end, every vertex has charge at least some value $q$. In a finite graph, if every codeword starts with charge $1$ and all other vertices with charge $0$, this immediately gives $|C|\ge q|V|$. In an infinite graph, the total charge is infinite and one has to argue on balls $N_r[w]$, whose boundary may receive charge from outside or send charge outside. The slow growth property is exactly what makes the contribution of the boundary negligible, provided that charge only travels a bounded distance. This is the content of the following theorem.

\begin{theorem}[Discharging Method~{\cite[Theorem~3.3]{SSW24}}]\label{thm:discharging}
Let $G$ be an infinite graph with bounded maximum degree which satisfies the slow growth property, and let $C\subseteq V(G)$. Suppose that the discharging method is applied to $G$ as follows. In the charging phase, charge $1$ is assigned to each vertex of $C$ and charge $0$ to the other vertices. The discharging rules are such that no vertex sends charge to a vertex at distance greater than $d$, for a fixed integer $d$. If at the end every vertex $v$ has final charge $\mathrm{chg}(v)$ with $q\le\mathrm{chg}(v)\le q'$, where $q$ and $q'$ are rational numbers, then $q\le\dens(C)\le q'$.
\end{theorem}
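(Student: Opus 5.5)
The statement is Theorem~\ref{thm:discharging}, quoted from~\cite{SSW24}. I would prove it by comparing the total charge inside a large ball before and after discharging, and showing that the only discrepancy comes from a thin boundary layer whose relative size vanishes by slow growth.

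Fix a vertex $w$ and a radius $r>d$. Every rule moves charge only between vertices at distance at most $d$, and the total charge is preserved. So the final charge of $N_r[w]$ differs from its initial charge $|C\cap N_r[w]|$ only through charge crossing the boundary. Charge can leave $N_r[w]$ only from vertices of $N_r[w]\setminus N_{r-d-1}[w]$. Charge can enter only from vertices of $N_{r+d}[w]\setminus N_r[w]$.

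The main technical point is to bound the amount crossing per vertex by a constant. Each vertex's initial charge is $0$ or $1$, and its final charge lies in $[q,q']$. I would pass to the equivalent ``net flow'' formulation: for each ordered pair $(u,v)$ with $\mathrm{dist}(u,v)\le d$, let $f(u,v)$ be the net charge moved from $u$ to $v$.

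The difficulty is that the final-charge bounds do not, by themselves, bound individual flows $f(u,v)$. In~\cite{SSW24} this is handled by assuming, as is implicit for the finitely many rules used in practice, that each rule transfers a bounded amount. So I take $|f(u,v)|\le M$ for a constant $M$. Since the maximum degree $\Delta$ is bounded, each vertex has at most $K=1+\Delta+\dots+\Delta^d$ vertices within distance $d$. This gives the estimate
\[
\Bigl|\sum_{v\in N_r[w]}\mathrm{chg}(v)-|C\cap N_r[w]|\Bigr|\le MK\,|N_{r+d}[w]\setminus N_{r-d-1}[w]|.
\]

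Dividing by $|N_r[w]|$ and using $q|N_r[w]|\le\sum_{v\in N_r[w]}\mathrm{chg}(v)\le q'|N_r[w]|$, I obtain
\[
q-\varepsilon_r\le \frac{|C\cap N_r[w]|}{|N_r[w]|}\le q'+\varepsilon_r,
\qquad
\varepsilon_r=MK\,\frac{|N_{r+d}[w]|-|N_{r-d-1}[w]|}{|N_r[w]|}.
\]
By the slow growth property, in the form stated in the preliminaries (the ratio $|N_{r+t}[w]|/|N_r[w]|\to1$ for every fixed $t$), we get $\varepsilon_r\to0$.

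Taking the $\limsup$ gives $q\le \dens_w(C)\le q'$ for every $w$. Since $\dens_w(C)$ is the same for all $w$ under slow growth, the infimum defining $\dens(C)$ is attained at this common value, and so $q\le\dens(C)\le q'$.
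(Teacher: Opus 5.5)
Your proof is correct and is the boundary-layer argument that the paper only sketches before quoting the theorem from~\cite{SSW24} (it gives no proof of its own). The charge of $N_r[w]$ changes only across pairs of vertices near the sphere of radius $r$, and slow growth makes that layer negligible relative to $|N_r[w]|$. The bound $|f(u,v)|\le M$ that you add is genuinely necessary, not a convenience: on the two-way infinite path with $C=\emptyset$, moving net charge $n$ from vertex $n$ to vertex $n-1$ for every integer $n$ leaves every vertex with final charge $1$. It also holds in all the applications in the paper, where every individual transfer is at most a fixed constant.
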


In our proofs, codewords may start with a charge larger than $1$, which only multiplies all charges by a constant, and charge only moves between vertices at distance at most $2$, so the final charges are bounded.
The general lower bound $2/(\Delta+2)$ for twin-free infinite graphs with the slow growth property and maximum degree $\Delta$, mentioned in the introduction, is proved in~\cite[Theorem~4.1]{SSW24} using Theorem~\ref{thm:discharging} with the most basic discharging rule, often called the \emph{share argument} in the literature: every vertex $v$ receives $1/|I(v)|$ from each codeword of $I(v)$. A codeword $c$ then gives $1$ to at most one vertex of $N[c]$, namely the vertex $v$ with $I(v)=\{c\}$ if it exists, and at most $1/2$ to each other vertex of $N[c]$, hence at most $1+\Delta/2$ in total.

\section{The trihexagonal grid}\label{sec:kagome}

The trihexagonal tiling, whose vertices all have configuration $3.6.3.6$, is the Archimedean grid also known as the \emph{kagome grid}. It is $4$-regular, so the general lower bound of~\cite{SSW24} gives $2/(4+2)=1/3$. We show that this bound is attained, which proves item~(a) of Theorem~\ref{thm:main}.

Note that the trihexagonal grid is an induced subgraph of the triangular grid. Its edges lie on three families of parallel straight lines, and every vertex lies on exactly two of these lines, which belong to different families. In Figure~\ref{fig:kagome}, these families are formed by the horizontal lines and by the lines making angles of $60^\circ$ and $-60^\circ$ with them. We call the lines of the first family \emph{horizontal}, and the vertices lying on no horizontal line \emph{middle vertices}, since each of them lies between two consecutive horizontal lines. Two adjacent vertices in a horizontal line are called \emph{consecutive}. Every vertex of a horizontal line has two (consecutive) neighbors on this line and two middle neighbors, one above and one below the line. Every middle vertex is adjacent to two consecutive vertices of the horizontal line just below it and to two consecutive vertices of the horizontal line just above it. Since every middle vertex has four neighbors on horizontal lines and every vertex of a horizontal line has two middle neighbors, $2/3$ of the vertices lie on horizontal lines.

\begin{figure}[t]
\centering
\begin{tikzpicture}
\begin{scope}
\draw (0.000,0.537) -- (7.440,0.537);
\draw (0.000,1.611) -- (7.440,1.611);
\draw (0.000,2.685) -- (7.440,2.685);
\draw (0.000,3.759) -- (7.440,3.759);
\draw (0.000,4.832) -- (7.440,4.832);
\draw (0.620,0.000) -- (0.310,0.537);
\draw (0.620,0.000) -- (0.930,0.537);
\draw (1.860,0.000) -- (1.550,0.537);
\draw (1.860,0.000) -- (2.170,0.537);
\draw (3.100,0.000) -- (2.790,0.537);
\draw (3.100,0.000) -- (3.410,0.537);
\draw (4.340,0.000) -- (4.030,0.537);
\draw (4.340,0.000) -- (4.650,0.537);
\draw (5.580,0.000) -- (5.270,0.537);
\draw (5.580,0.000) -- (5.890,0.537);
\draw (6.820,0.000) -- (6.510,0.537);
\draw (6.820,0.000) -- (7.130,0.537);
\draw (0.000,1.074) -- (0.310,1.611);
\draw (0.000,1.074) -- (0.310,0.537);
\draw (1.240,1.074) -- (0.930,1.611);
\draw (1.240,1.074) -- (1.550,1.611);
\draw (1.240,1.074) -- (0.930,0.537);
\draw (1.240,1.074) -- (1.550,0.537);
\draw (2.480,1.074) -- (2.170,1.611);
\draw (2.480,1.074) -- (2.790,1.611);
\draw (2.480,1.074) -- (2.170,0.537);
\draw (2.480,1.074) -- (2.790,0.537);
\draw (3.720,1.074) -- (3.410,1.611);
\draw (3.720,1.074) -- (4.030,1.611);
\draw (3.720,1.074) -- (3.410,0.537);
\draw (3.720,1.074) -- (4.030,0.537);
\draw (4.960,1.074) -- (4.650,1.611);
\draw (4.960,1.074) -- (5.270,1.611);
\draw (4.960,1.074) -- (4.650,0.537);
\draw (4.960,1.074) -- (5.270,0.537);
\draw (6.200,1.074) -- (5.890,1.611);
\draw (6.200,1.074) -- (6.510,1.611);
\draw (6.200,1.074) -- (5.890,0.537);
\draw (6.200,1.074) -- (6.510,0.537);
\draw (7.440,1.074) -- (7.130,1.611);
\draw (7.440,1.074) -- (7.130,0.537);
\draw (0.620,2.148) -- (0.310,2.685);
\draw (0.620,2.148) -- (0.930,2.685);
\draw (0.620,2.148) -- (0.310,1.611);
\draw (0.620,2.148) -- (0.930,1.611);
\draw (1.860,2.148) -- (1.550,2.685);
\draw (1.860,2.148) -- (2.170,2.685);
\draw (1.860,2.148) -- (1.550,1.611);
\draw (1.860,2.148) -- (2.170,1.611);
\draw (3.100,2.148) -- (2.790,2.685);
\draw (3.100,2.148) -- (3.410,2.685);
\draw (3.100,2.148) -- (2.790,1.611);
\draw (3.100,2.148) -- (3.410,1.611);
\draw (4.340,2.148) -- (4.030,2.685);
\draw (4.340,2.148) -- (4.650,2.685);
\draw (4.340,2.148) -- (4.030,1.611);
\draw (4.340,2.148) -- (4.650,1.611);
\draw (5.580,2.148) -- (5.270,2.685);
\draw (5.580,2.148) -- (5.890,2.685);
\draw (5.580,2.148) -- (5.270,1.611);
\draw (5.580,2.148) -- (5.890,1.611);
\draw (6.820,2.148) -- (6.510,2.685);
\draw (6.820,2.148) -- (7.130,2.685);
\draw (6.820,2.148) -- (6.510,1.611);
\draw (6.820,2.148) -- (7.130,1.611);
\draw (0.000,3.222) -- (0.310,3.759);
\draw (0.000,3.222) -- (0.310,2.685);
\draw (1.240,3.222) -- (0.930,3.759);
\draw (1.240,3.222) -- (1.550,3.759);
\draw (1.240,3.222) -- (0.930,2.685);
\draw (1.240,3.222) -- (1.550,2.685);
\draw (2.480,3.222) -- (2.170,3.759);
\draw (2.480,3.222) -- (2.790,3.759);
\draw (2.480,3.222) -- (2.170,2.685);
\draw (2.480,3.222) -- (2.790,2.685);
\draw (3.720,3.222) -- (3.410,3.759);
\draw (3.720,3.222) -- (4.030,3.759);
\draw (3.720,3.222) -- (3.410,2.685);
\draw (3.720,3.222) -- (4.030,2.685);
\draw (4.960,3.222) -- (4.650,3.759);
\draw (4.960,3.222) -- (5.270,3.759);
\draw (4.960,3.222) -- (4.650,2.685);
\draw (4.960,3.222) -- (5.270,2.685);
\draw (6.200,3.222) -- (5.890,3.759);
\draw (6.200,3.222) -- (6.510,3.759);
\draw (6.200,3.222) -- (5.890,2.685);
\draw (6.200,3.222) -- (6.510,2.685);
\draw (7.440,3.222) -- (7.130,3.759);
\draw (7.440,3.222) -- (7.130,2.685);
\draw (0.620,4.295) -- (0.310,4.832);
\draw (0.620,4.295) -- (0.930,4.832);
\draw (0.620,4.295) -- (0.310,3.759);
\draw (0.620,4.295) -- (0.930,3.759);
\draw (1.860,4.295) -- (1.550,4.832);
\draw (1.860,4.295) -- (2.170,4.832);
\draw (1.860,4.295) -- (1.550,3.759);
\draw (1.860,4.295) -- (2.170,3.759);
\draw (3.100,4.295) -- (2.790,4.832);
\draw (3.100,4.295) -- (3.410,4.832);
\draw (3.100,4.295) -- (2.790,3.759);
\draw (3.100,4.295) -- (3.410,3.759);
\draw (4.340,4.295) -- (4.030,4.832);
\draw (4.340,4.295) -- (4.650,4.832);
\draw (4.340,4.295) -- (4.030,3.759);
\draw (4.340,4.295) -- (4.650,3.759);
\draw (5.580,4.295) -- (5.270,4.832);
\draw (5.580,4.295) -- (5.890,4.832);
\draw (5.580,4.295) -- (5.270,3.759);
\draw (5.580,4.295) -- (5.890,3.759);
\draw (6.820,4.295) -- (6.510,4.832);
\draw (6.820,4.295) -- (7.130,4.832);
\draw (6.820,4.295) -- (6.510,3.759);
\draw (6.820,4.295) -- (7.130,3.759);
\draw (0.000,5.369) -- (0.310,4.832);
\draw (1.240,5.369) -- (0.930,4.832);
\draw (1.240,5.369) -- (1.550,4.832);
\draw (2.480,5.369) -- (2.170,4.832);
\draw (2.480,5.369) -- (2.790,4.832);
\draw (3.720,5.369) -- (3.410,4.832);
\draw (3.720,5.369) -- (4.030,4.832);
\draw (4.960,5.369) -- (4.650,4.832);
\draw (4.960,5.369) -- (5.270,4.832);
\draw (6.200,5.369) -- (5.890,4.832);
\draw (6.200,5.369) -- (6.510,4.832);
\draw (7.440,5.369) -- (7.130,4.832);
\fill[white] (0.000,1.074) circle (2.6pt); \draw (0.000,1.074) circle (2.6pt);
\fill[white] (0.000,3.222) circle (2.6pt); \draw (0.000,3.222) circle (2.6pt);
\fill[white] (0.000,5.369) circle (2.6pt); \draw (0.000,5.369) circle (2.6pt);
\fill (0.310,0.537) circle (4.4pt);
\fill (0.310,1.611) circle (4.4pt);
\fill (0.310,2.685) circle (4.4pt);
\fill (0.310,3.759) circle (4.4pt);
\fill (0.310,4.832) circle (4.4pt);
\fill[white] (0.620,0.000) circle (2.6pt); \draw (0.620,0.000) circle (2.6pt);
\fill[white] (0.620,2.148) circle (2.6pt); \draw (0.620,2.148) circle (2.6pt);
\fill[white] (0.620,4.295) circle (2.6pt); \draw (0.620,4.295) circle (2.6pt);
\fill[white] (0.930,0.537) circle (2.6pt); \draw (0.930,0.537) circle (2.6pt);
\fill[white] (0.930,1.611) circle (2.6pt); \draw (0.930,1.611) circle (2.6pt);
\fill[white] (0.930,2.685) circle (2.6pt); \draw (0.930,2.685) circle (2.6pt);
\fill[white] (0.930,3.759) circle (2.6pt); \draw (0.930,3.759) circle (2.6pt);
\fill[white] (0.930,4.832) circle (2.6pt); \draw (0.930,4.832) circle (2.6pt);
\fill[white] (1.240,1.074) circle (2.6pt); \draw (1.240,1.074) circle (2.6pt);
\fill[white] (1.240,3.222) circle (2.6pt); \draw (1.240,3.222) circle (2.6pt);
\fill[white] (1.240,5.369) circle (2.6pt); \draw (1.240,5.369) circle (2.6pt);
\fill (1.550,0.537) circle (4.4pt);
\fill (1.550,1.611) circle (4.4pt);
\fill (1.550,2.685) circle (4.4pt);
\fill (1.550,3.759) circle (4.4pt);
\fill (1.550,4.832) circle (4.4pt);
\fill[white] (1.860,0.000) circle (2.6pt); \draw (1.860,0.000) circle (2.6pt);
\fill[white] (1.860,2.148) circle (2.6pt); \draw (1.860,2.148) circle (2.6pt);
\fill[white] (1.860,4.295) circle (2.6pt); \draw (1.860,4.295) circle (2.6pt);
\fill[white] (2.170,0.537) circle (2.6pt); \draw (2.170,0.537) circle (2.6pt);
\fill[white] (2.170,1.611) circle (2.6pt); \draw (2.170,1.611) circle (2.6pt);
\fill[white] (2.170,2.685) circle (2.6pt); \draw (2.170,2.685) circle (2.6pt);
\fill[white] (2.170,3.759) circle (2.6pt); \draw (2.170,3.759) circle (2.6pt);
\fill[white] (2.170,4.832) circle (2.6pt); \draw (2.170,4.832) circle (2.6pt);
\fill[white] (2.480,1.074) circle (2.6pt); \draw (2.480,1.074) circle (2.6pt);
\fill[white] (2.480,3.222) circle (2.6pt); \draw (2.480,3.222) circle (2.6pt);
\fill[white] (2.480,5.369) circle (2.6pt); \draw (2.480,5.369) circle (2.6pt);
\fill (2.790,0.537) circle (4.4pt);
\fill (2.790,1.611) circle (4.4pt);
\fill (2.790,2.685) circle (4.4pt);
\fill (2.790,3.759) circle (4.4pt);
\fill (2.790,4.832) circle (4.4pt);
\fill[white] (3.100,0.000) circle (2.6pt); \draw (3.100,0.000) circle (2.6pt);
\fill[white] (3.100,2.148) circle (2.6pt); \draw (3.100,2.148) circle (2.6pt);
\fill[white] (3.100,4.295) circle (2.6pt); \draw (3.100,4.295) circle (2.6pt);
\fill[white] (3.410,0.537) circle (2.6pt); \draw (3.410,0.537) circle (2.6pt);
\fill[white] (3.410,1.611) circle (2.6pt); \draw (3.410,1.611) circle (2.6pt);
\fill[white] (3.410,2.685) circle (2.6pt); \draw (3.410,2.685) circle (2.6pt);
\fill[white] (3.410,3.759) circle (2.6pt); \draw (3.410,3.759) circle (2.6pt);
\fill[white] (3.410,4.832) circle (2.6pt); \draw (3.410,4.832) circle (2.6pt);
\fill[white] (3.720,1.074) circle (2.6pt); \draw (3.720,1.074) circle (2.6pt);
\fill[white] (3.720,3.222) circle (2.6pt); \draw (3.720,3.222) circle (2.6pt);
\fill[white] (3.720,5.369) circle (2.6pt); \draw (3.720,5.369) circle (2.6pt);
\fill (4.030,0.537) circle (4.4pt);
\fill (4.030,1.611) circle (4.4pt);
\fill (4.030,2.685) circle (4.4pt);
\fill (4.030,3.759) circle (4.4pt);
\fill (4.030,4.832) circle (4.4pt);
\fill[white] (4.340,0.000) circle (2.6pt); \draw (4.340,0.000) circle (2.6pt);
\fill[white] (4.340,2.148) circle (2.6pt); \draw (4.340,2.148) circle (2.6pt);
\fill[white] (4.340,4.295) circle (2.6pt); \draw (4.340,4.295) circle (2.6pt);
\fill[white] (4.650,0.537) circle (2.6pt); \draw (4.650,0.537) circle (2.6pt);
\fill[white] (4.650,1.611) circle (2.6pt); \draw (4.650,1.611) circle (2.6pt);
\fill[white] (4.650,2.685) circle (2.6pt); \draw (4.650,2.685) circle (2.6pt);
\fill[white] (4.650,3.759) circle (2.6pt); \draw (4.650,3.759) circle (2.6pt);
\fill[white] (4.650,4.832) circle (2.6pt); \draw (4.650,4.832) circle (2.6pt);
\fill[white] (4.960,1.074) circle (2.6pt); \draw (4.960,1.074) circle (2.6pt);
\fill[white] (4.960,3.222) circle (2.6pt); \draw (4.960,3.222) circle (2.6pt);
\fill[white] (4.960,5.369) circle (2.6pt); \draw (4.960,5.369) circle (2.6pt);
\fill (5.270,0.537) circle (4.4pt);
\fill (5.270,1.611) circle (4.4pt);
\fill (5.270,2.685) circle (4.4pt);
\fill (5.270,3.759) circle (4.4pt);
\fill (5.270,4.832) circle (4.4pt);
\fill[white] (5.580,0.000) circle (2.6pt); \draw (5.580,0.000) circle (2.6pt);
\fill[white] (5.580,2.148) circle (2.6pt); \draw (5.580,2.148) circle (2.6pt);
\fill[white] (5.580,4.295) circle (2.6pt); \draw (5.580,4.295) circle (2.6pt);
\fill[white] (5.890,0.537) circle (2.6pt); \draw (5.890,0.537) circle (2.6pt);
\fill[white] (5.890,1.611) circle (2.6pt); \draw (5.890,1.611) circle (2.6pt);
\fill[white] (5.890,2.685) circle (2.6pt); \draw (5.890,2.685) circle (2.6pt);
\fill[white] (5.890,3.759) circle (2.6pt); \draw (5.890,3.759) circle (2.6pt);
\fill[white] (5.890,4.832) circle (2.6pt); \draw (5.890,4.832) circle (2.6pt);
\fill[white] (6.200,1.074) circle (2.6pt); \draw (6.200,1.074) circle (2.6pt);
\fill[white] (6.200,3.222) circle (2.6pt); \draw (6.200,3.222) circle (2.6pt);
\fill[white] (6.200,5.369) circle (2.6pt); \draw (6.200,5.369) circle (2.6pt);
\fill (6.510,0.537) circle (4.4pt);
\fill (6.510,1.611) circle (4.4pt);
\fill (6.510,2.685) circle (4.4pt);
\fill (6.510,3.759) circle (4.4pt);
\fill (6.510,4.832) circle (4.4pt);
\fill[white] (6.820,0.000) circle (2.6pt); \draw (6.820,0.000) circle (2.6pt);
\fill[white] (6.820,2.148) circle (2.6pt); \draw (6.820,2.148) circle (2.6pt);
\fill[white] (6.820,4.295) circle (2.6pt); \draw (6.820,4.295) circle (2.6pt);
\fill[white] (7.130,0.537) circle (2.6pt); \draw (7.130,0.537) circle (2.6pt);
\fill[white] (7.130,1.611) circle (2.6pt); \draw (7.130,1.611) circle (2.6pt);
\fill[white] (7.130,2.685) circle (2.6pt); \draw (7.130,2.685) circle (2.6pt);
\fill[white] (7.130,3.759) circle (2.6pt); \draw (7.130,3.759) circle (2.6pt);
\fill[white] (7.130,4.832) circle (2.6pt); \draw (7.130,4.832) circle (2.6pt);
\fill[white] (7.440,1.074) circle (2.6pt); \draw (7.440,1.074) circle (2.6pt);
\fill[white] (7.440,3.222) circle (2.6pt); \draw (7.440,3.222) circle (2.6pt);
\fill[white] (7.440,5.369) circle (2.6pt); \draw (7.440,5.369) circle (2.6pt);
\end{scope}
\end{tikzpicture}
\caption{A patch of the kagome grid with five horizontal lines. The black vertices form the idcode of density $1/3$ of the proof of Theorem~\ref{thm:kagome}: no middle vertex, and codewords and non-codewords alternate along each horizontal line. The codewords form vertical columns.}
\label{fig:kagome}
\end{figure}

\begin{theorem}\label{thm:kagome}
The minimum density of an idcode of the trihexagonal grid $3.6.3.6$ is $1/3$.
\end{theorem}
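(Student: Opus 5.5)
The proof splits into a lower bound and a matching construction.

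For the lower bound there is nothing new to do. The trihexagonal grid is $4$-regular and twin-free, as noted in Section~2. Like every Archimedean grid, it has the slow growth property. So the general bound $2/(\Delta+2)$ of~\cite[Theorem~4.1]{SSW24} gives $\dens(C)\ge 2/6=1/3$ for every idcode $C$. This bound is the share argument combined with Theorem~\ref{thm:discharging}.

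For the upper bound I take the code $C$ of Figure~\ref{fig:kagome}. It contains no middle vertex, and codewords and non-codewords alternate along every horizontal line. Its density is $1/3$: two thirds of the vertices lie on horizontal lines, and half of those are codewords. Since $C$ is periodic, this density can be computed on a fundamental domain, and Theorem~\ref{thm:discharging} with trivial rules (or a direct count on balls) confirms it.

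It remains to check that $C$ is an idcode, by computing $I(v)$ for each type of vertex. Take a horizontal line $L$ and a codeword $c$ on it. Its neighbors on $L$ are non-codewords, and its middle neighbors are non-codewords, so $I(c)=\{c\}$. A non-codeword $u$ on $L$ has two consecutive neighbors on $L$, both codewords, and no middle neighbor in $C$. Hence $I(u)$ is the pair of codewords of $L$ adjacent to $u$, which lie at distance $2$ along $L$. A middle vertex $m$ is adjacent to two consecutive vertices of the line below and two consecutive vertices of the line above. Each consecutive pair contains exactly one codeword, so $|I(m)|=2$, with one codeword on each of the two lines.

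These sets are nonempty and pairwise distinct. The singletons $I(c)=\{c\}$ are distinct for distinct $c$. A set $I(u)$ consists of two codewords on the same line at distance $2$, and it determines $u$ as their unique common neighbor on $L$. A set $I(m)$ consists of two codewords on different lines, so it never equals any $I(u)$. It also determines $m$, provided two distinct middle vertices between the same pair of lines never see the same codeword pair.

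The main point needing care is this last claim. It depends on the alignment shown in the figure, where codewords form vertical columns. In that alignment, $m$ sees exactly the codeword directly below-left or below-right of it and the one in the same column above. I will check from the figure coordinates that each pair of vertically aligned codewords on consecutive lines has exactly one common middle neighbor, namely the middle vertex between them. This shows that $m\mapsto I(m)$ is injective and completes the proof.
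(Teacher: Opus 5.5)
Your proof is correct and follows the paper's argument. The lower bound comes from the general bound $2/(\Delta+2)$ of~\cite{SSW24}, and the upper bound from the code with no middle vertex and alternating codewords on each horizontal line, checked by computing $I(v)$ for the three types of vertices. The step you flag as delicate does not depend on the column alignment. A vertex of a horizontal line has exactly one middle neighbor on each side of its line, so the codeword of $I(m)$ on the line below $m$ already determines $m$; this is why the paper notes that any choice of parity class on each line gives an idcode.
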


\begin{proof}
The trihexagonal grid is $4$-regular, twin-free, and has the slow growth property, so every idcode has density at least $2/(4+2)=1/3$ by~\cite[Theorem~4.1]{SSW24}. Conversely, let $C$ be a set of vertices containing no middle vertex and such that codewords and non-codewords alternate along each horizontal line, as in Figure~\ref{fig:kagome}. If $v\in C$, then $I(v)=\{v\}$. If $v\notin C$ lies on a horizontal line, then $I(v)$ consists of the two neighbors of $v$ on this line, and $v$ is their only common neighbor, since a middle vertex is adjacent only to consecutive vertices of a line. If $v$ is a middle vertex, then $I(v)$ consists of one codeword on the horizontal line just below $v$ and one on the line just above $v$, and again $v$ is their only common neighbor. Hence the sets $I(v)$ are nonempty and pairwise distinct, and $C$ is an idcode. Its density is $1/3$, since $C$ contains half of the vertices of the horizontal lines, which are $2/3$ of all vertices.
\end{proof}

The general bound is tight here, in contrast with the other grids treated in this paper. In the codes of the proof of Theorem~\ref{thm:kagome}, every codeword $c$ has $I(c)=\{c\}$ and every vertex $v\notin C$ has $|I(v)|=2$, so every codeword gives exactly $1+4/2=3$ in the share argument. Since the choice of the parity class on each horizontal line is free, there are uncountably many idcodes of density $1/3$. Note also that the code of Figure~\ref{fig:kagome} is not an idcode of the triangular grid (which has an extra vertex at the center of every hexagon of Figure~\ref{fig:kagome}): every such center has the same set $I$ as one of its middle neighbors.

\section{The truncated hexagonal grid}\label{sec:trhex}

The truncated hexagonal tiling has vertex configuration $3.12.12$ (see Figure~\ref{fig:trhex}). It is the truncation $\operatorname{tr}(6^3)$: every vertex of the hexagonal grid is replaced by a triangle. The grid is cubic, so the general lower bound is only $2/5$. In this section we prove that the minimum density is $2/3$, which is item~(b) of Theorem~\ref{thm:main}.

We first prove a general lower bound for every cubic graph in which every vertex lies on exactly one triangle, such as the truncation of any cubic graph. In such a graph, every vertex $v$ lies on exactly one triangle $T(v)$ and has exactly one neighbor outside it, its \emph{bridge neighbor} $\beta(v)$, and $\beta(\beta(v))=v$. The bridge neighbors of the three vertices of a triangle are distinct, since a common bridge neighbor would form a second triangle. Such a graph is twin-free, since $N[u]=N[v]$ for distinct vertices $u,v$ would put $u$ on two triangles.

\begin{figure}[t]
\centering
\begin{tikzpicture}[scale=0.62]
\draw (0.500,3.232)--(0.000,4.098);
\draw (0.500,3.232)--(1.000,4.098);
\draw (0.500,3.232)--(0.500,2.232);
\draw (0.000,4.098)--(-0.693,4.498);
\draw (0.000,4.098)--(1.000,4.098);
\draw (2.366,6.464)--(1.866,7.330);
\draw (2.366,6.464)--(2.866,7.330);
\draw (2.366,6.464)--(2.366,5.464);
\draw (1.866,7.330)--(1.000,7.830);
\draw (1.866,7.330)--(2.866,7.330);
\draw (1.000,7.830)--(0.200,7.830);
\draw (1.000,7.830)--(0.600,8.523);
\draw (2.366,0.000)--(1.866,0.866);
\draw (2.366,0.000)--(2.866,0.866);
\draw (2.366,0.000)--(2.366,-0.450);
\draw (1.866,0.866)--(1.000,1.366);
\draw (1.866,0.866)--(2.866,0.866);
\draw (1.000,1.366)--(0.200,1.366);
\draw (1.000,1.366)--(0.500,2.232);
\draw (4.232,3.232)--(3.732,4.098);
\draw (4.232,3.232)--(4.732,4.098);
\draw (4.232,3.232)--(4.232,2.232);
\draw (3.732,4.098)--(2.866,4.598);
\draw (3.732,4.098)--(4.732,4.098);
\draw (2.866,4.598)--(1.866,4.598);
\draw (2.866,4.598)--(2.366,5.464);
\draw (1.866,4.598)--(1.000,4.098);
\draw (1.866,4.598)--(2.366,5.464);
\draw (0.500,2.232)--(0.100,1.539);
\draw (6.098,6.464)--(5.598,7.330);
\draw (6.098,6.464)--(6.598,7.330);
\draw (6.098,6.464)--(6.098,5.464);
\draw (5.598,7.330)--(4.732,7.830);
\draw (5.598,7.330)--(6.598,7.330);
\draw (4.732,7.830)--(3.732,7.830);
\draw (4.732,7.830)--(4.332,8.523);
\draw (3.732,7.830)--(2.866,7.330);
\draw (3.732,7.830)--(4.132,8.523);
\draw (6.098,0.000)--(5.598,0.866);
\draw (6.098,0.000)--(6.598,0.866);
\draw (6.098,0.000)--(6.098,-0.450);
\draw (5.598,0.866)--(4.732,1.366);
\draw (5.598,0.866)--(6.598,0.866);
\draw (4.732,1.366)--(3.732,1.366);
\draw (4.732,1.366)--(4.232,2.232);
\draw (3.732,1.366)--(2.866,0.866);
\draw (3.732,1.366)--(4.232,2.232);
\draw (7.964,3.232)--(7.464,4.098);
\draw (7.964,3.232)--(8.464,4.098);
\draw (7.964,3.232)--(7.964,2.232);
\draw (7.464,4.098)--(6.598,4.598);
\draw (7.464,4.098)--(8.464,4.098);
\draw (6.598,4.598)--(5.598,4.598);
\draw (6.598,4.598)--(6.098,5.464);
\draw (5.598,4.598)--(4.732,4.098);
\draw (5.598,4.598)--(6.098,5.464);
\draw (9.830,6.464)--(9.330,7.330);
\draw (9.830,6.464)--(10.330,7.330);
\draw (9.830,6.464)--(9.830,5.464);
\draw (9.330,7.330)--(8.464,7.830);
\draw (9.330,7.330)--(10.330,7.330);
\draw (8.464,7.830)--(7.464,7.830);
\draw (8.464,7.830)--(8.064,8.523);
\draw (7.464,7.830)--(6.598,7.330);
\draw (7.464,7.830)--(7.864,8.523);
\draw (9.830,0.000)--(9.330,0.866);
\draw (9.830,0.000)--(10.330,0.866);
\draw (9.830,0.000)--(9.830,-0.450);
\draw (9.330,0.866)--(8.464,1.366);
\draw (9.330,0.866)--(10.330,0.866);
\draw (8.464,1.366)--(7.464,1.366);
\draw (8.464,1.366)--(7.964,2.232);
\draw (7.464,1.366)--(6.598,0.866);
\draw (7.464,1.366)--(7.964,2.232);
\draw (11.696,3.232)--(11.196,4.098);
\draw (11.696,3.232)--(12.196,4.098);
\draw (11.696,3.232)--(11.696,2.232);
\draw (11.196,4.098)--(10.330,4.598);
\draw (11.196,4.098)--(12.196,4.098);
\draw (10.330,4.598)--(9.330,4.598);
\draw (10.330,4.598)--(9.830,5.464);
\draw (9.330,4.598)--(8.464,4.098);
\draw (9.330,4.598)--(9.830,5.464);
\draw (13.562,6.464)--(13.062,7.330);
\draw (13.562,6.464)--(13.962,7.157);
\draw (13.562,6.464)--(13.562,5.464);
\draw (13.062,7.330)--(12.196,7.830);
\draw (13.062,7.330)--(13.862,7.330);
\draw (12.196,7.830)--(11.196,7.830);
\draw (12.196,7.830)--(11.796,8.523);
\draw (11.196,7.830)--(10.330,7.330);
\draw (11.196,7.830)--(11.596,8.523);
\draw (13.562,0.000)--(13.062,0.866);
\draw (13.562,0.000)--(13.962,0.693);
\draw (13.562,0.000)--(13.562,-0.450);
\draw (13.062,0.866)--(12.196,1.366);
\draw (13.062,0.866)--(13.862,0.866);
\draw (12.196,1.366)--(11.196,1.366);
\draw (12.196,1.366)--(11.696,2.232);
\draw (11.196,1.366)--(10.330,0.866);
\draw (11.196,1.366)--(11.696,2.232);
\draw (13.062,4.598)--(13.862,4.598);
\draw (13.062,4.598)--(12.196,4.098);
\draw (13.062,4.598)--(13.562,5.464);
\draw (13.562,5.464)--(13.962,4.771);
\fill (0.500,3.232) circle (4.35pt);
\fill[white] (0.000,4.098) circle (2.58pt); \draw (0.000,4.098) circle (2.58pt);
\fill (2.366,6.464) circle (4.35pt);
\fill[white] (1.866,7.330) circle (2.58pt); \draw (1.866,7.330) circle (2.58pt);
\fill (1.000,7.830) circle (4.35pt);
\fill (2.366,0.000) circle (4.35pt);
\fill[white] (1.866,0.866) circle (2.58pt); \draw (1.866,0.866) circle (2.58pt);
\fill (1.000,1.366) circle (4.35pt);
\fill (4.232,3.232) circle (4.35pt);
\fill[white] (3.732,4.098) circle (2.58pt); \draw (3.732,4.098) circle (2.58pt);
\fill (2.866,4.598) circle (4.35pt);
\fill[white] (1.866,4.598) circle (2.58pt); \draw (1.866,4.598) circle (2.58pt);
\fill (1.000,4.098) circle (4.35pt);
\fill (0.500,2.232) circle (4.35pt);
\fill (6.098,6.464) circle (4.35pt);
\fill[white] (5.598,7.330) circle (2.58pt); \draw (5.598,7.330) circle (2.58pt);
\fill (4.732,7.830) circle (4.35pt);
\fill[white] (3.732,7.830) circle (2.58pt); \draw (3.732,7.830) circle (2.58pt);
\fill (2.866,7.330) circle (4.35pt);
\fill (2.366,5.464) circle (4.35pt);
\fill (6.098,0.000) circle (4.35pt);
\fill[white] (5.598,0.866) circle (2.58pt); \draw (5.598,0.866) circle (2.58pt);
\fill (4.732,1.366) circle (4.35pt);
\fill[white] (3.732,1.366) circle (2.58pt); \draw (3.732,1.366) circle (2.58pt);
\fill (2.866,0.866) circle (4.35pt);
\fill (7.964,3.232) circle (4.35pt);
\fill[white] (7.464,4.098) circle (2.58pt); \draw (7.464,4.098) circle (2.58pt);
\fill (6.598,4.598) circle (4.35pt);
\fill[white] (5.598,4.598) circle (2.58pt); \draw (5.598,4.598) circle (2.58pt);
\fill (4.732,4.098) circle (4.35pt);
\fill (4.232,2.232) circle (4.35pt);
\fill (9.830,6.464) circle (4.35pt);
\fill[white] (9.330,7.330) circle (2.58pt); \draw (9.330,7.330) circle (2.58pt);
\fill (8.464,7.830) circle (4.35pt);
\fill[white] (7.464,7.830) circle (2.58pt); \draw (7.464,7.830) circle (2.58pt);
\fill (6.598,7.330) circle (4.35pt);
\fill (6.098,5.464) circle (4.35pt);
\fill (9.830,0.000) circle (4.35pt);
\fill[white] (9.330,0.866) circle (2.58pt); \draw (9.330,0.866) circle (2.58pt);
\fill (8.464,1.366) circle (4.35pt);
\fill[white] (7.464,1.366) circle (2.58pt); \draw (7.464,1.366) circle (2.58pt);
\fill (6.598,0.866) circle (4.35pt);
\fill (11.696,3.232) circle (4.35pt);
\fill[white] (11.196,4.098) circle (2.58pt); \draw (11.196,4.098) circle (2.58pt);
\fill (10.330,4.598) circle (4.35pt);
\fill[white] (9.330,4.598) circle (2.58pt); \draw (9.330,4.598) circle (2.58pt);
\fill (8.464,4.098) circle (4.35pt);
\fill (7.964,2.232) circle (4.35pt);
\fill (13.562,6.464) circle (4.35pt);
\fill[white] (13.062,7.330) circle (2.58pt); \draw (13.062,7.330) circle (2.58pt);
\fill (12.196,7.830) circle (4.35pt);
\fill[white] (11.196,7.830) circle (2.58pt); \draw (11.196,7.830) circle (2.58pt);
\fill (10.330,7.330) circle (4.35pt);
\fill (9.830,5.464) circle (4.35pt);
\fill (13.562,0.000) circle (4.35pt);
\fill[white] (13.062,0.866) circle (2.58pt); \draw (13.062,0.866) circle (2.58pt);
\fill (12.196,1.366) circle (4.35pt);
\fill[white] (11.196,1.366) circle (2.58pt); \draw (11.196,1.366) circle (2.58pt);
\fill (10.330,0.866) circle (4.35pt);
\fill[white] (13.062,4.598) circle (2.58pt); \draw (13.062,4.598) circle (2.58pt);
\fill (12.196,4.098) circle (4.35pt);
\fill (11.696,2.232) circle (4.35pt);
\fill (13.562,5.464) circle (4.35pt);
\end{tikzpicture}
\caption{The idcode of density $2/3$ of the truncated hexagonal grid of the proof of Theorem~\ref{thm:trhex} (black vertices). In every triangle, the white vertex is the exit: it is joined by a non-vertical edge to the triangle on its left.}
\label{fig:trhex}
\end{figure}

\begin{theorem}\label{thm:3cycle}
Every idcode of a cubic graph with the slow growth property in which every vertex lies on exactly one triangle has density at least $2/3$.
\end{theorem}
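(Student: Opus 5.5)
The plan is a one-round discharging argument based on Theorem~\ref{thm:discharging}. The key local fact is that in every triangle, at least two of the three bridge neighbors must be codewords.

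\emph{Step 1 (local lemma).} Let $T=\{u,v,w\}$ be a triangle. Then $N[u]=\{u,v,w,\beta(u)\}$ and $N[v]=\{u,v,w,\beta(v)\}$, so $I(u)\,\triangle\, I(v)\subseteq\{\beta(u),\beta(v)\}$. Hence $I(u)\neq I(v)$ forces $\beta(u)\in C$ or $\beta(v)\in C$. Applying this to the three pairs of vertices of $T$, at most one of $\beta(u),\beta(v),\beta(w)$ lies outside $C$. As noted before the statement, these three bridge neighbors are distinct. So the set $\beta(T)$ contains at least two codewords. The graph is twin-free, so idcodes exist and the statement is not vacuous.

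\emph{Step 2 (discharging rule).} Every codeword starts with charge $1$ and every other vertex with charge $0$. The single rule: each codeword $c$ sends $1/3$ to each of the three vertices of the triangle $T(\beta(c))$ and keeps nothing.

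Charge is conserved, since each codeword sends exactly $3\cdot 1/3=1$. Charge travels at distance at most $2$: from $c$ to $\beta(c)$ is distance $1$, and from $c$ to the other two vertices of $T(\beta(c))$ is distance $2$.

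\emph{Step 3 (final charges).} Fix a vertex $x$ with triangle $T=T(x)$. A codeword $c$ sends charge to $x$ exactly when $T(\beta(c))=T$, that is, when $\beta(c)\in T$. Since $\beta$ is an involution, this means $c\in\beta(T)$. Hence $x$ receives exactly $1/3$ times the number of codewords in $\beta(T)$.

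This number is at least $2$ by Step~1 and at most $3$. So every final charge lies in $[2/3,1]$.

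\emph{Step 4 (conclusion).} The graph is cubic, so its maximum degree is bounded, and it has the slow growth property by hypothesis. Theorem~\ref{thm:discharging} with $d=2$, $q=2/3$ and $q'=1$ gives $\dens(C)\ge 2/3$.

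The only step with real content is Step~1. The rest is bookkeeping, and the one point to verify there is that $\beta$ being an involution makes the charge sent to $T$ count precisely the codewords of $\beta(T)$.
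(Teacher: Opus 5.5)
Your proposal is correct and follows the paper's proof essentially step for step. Both use the separation of each pair in a triangle to force at least two codewords among $\beta(u),\beta(v),\beta(w)$. Both then apply the same rule, in which each codeword $c$ sends $1/3$ to every vertex of $T(\beta(c))$, and conclude with Theorem~\ref{thm:discharging}.
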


\begin{proof}
Let $C$ be an idcode, and let $T=\{u,v,w\}$ be a triangle. Since $N[u]\triangle N[v]$ is the set $\{\beta(u),\beta(v)\}$, the code $C$ contains $\beta(u)$ or $\beta(v)$, and similarly for the pairs $u,w$ and $v,w$. Hence at least two of $\beta(u),\beta(v),\beta(w)$ are codewords. Every codeword starts with charge $1$ and all other vertices with charge $0$, and every codeword $c$ sends $1/3$ to each vertex of the triangle $T(\beta(c))$, which is at distance at most $2$ from $c$. Every vertex $v$ then receives $1/3$ for every codeword among the bridge neighbors of the vertices of $T(v)$, hence at least $2/3$, and Theorem~\ref{thm:discharging} gives $\dens(C)\ge 2/3$.
\end{proof}

For example, Theorem~\ref{thm:3cycle} applies to the truncation $\operatorname{tr}(4.8.8)$, a non-Archimedean cubic grid in which every vertex lies on exactly one triangle and has configuration $3.8.16$ or $3.16.16$.

We now turn to the truncated hexagonal grid $3.12.12$. We draw the grid so that one third of the edges $v\beta(v)$ are vertical (Figure~\ref{fig:trhex}). Then every triangle has one vertex $z$ with $z\beta(z)$ vertical, one vertex $x$ with $\beta(x)$ to its left and one vertex $y$ with $\beta(y)$ to its right. We call $x$ the \emph{exit} of the triangle.

\begin{theorem}\label{thm:trhex}
The minimum density of an idcode of the truncated hexagonal grid $3.12.12$ is $2/3$.
\end{theorem}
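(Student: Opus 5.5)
The lower bound is immediate from Theorem~\ref{thm:3cycle}. The grid $3.12.12=\operatorname{tr}(6^3)$ is cubic, every vertex lies on exactly one triangle, and, like every Archimedean grid, it has the slow growth property. So every idcode has density at least $2/3$, and it remains to exhibit an idcode of density exactly $2/3$.

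For the upper bound I would take the code $C$ of Figure~\ref{fig:trhex}: in every triangle, the exit $x$ is the unique non-codeword and the other two vertices $y,z$ are codewords. Two thirds of the vertices are codewords, so $\dens(C)=2/3$. Before computing the sets $I(v)$, I would record the combinatorics of exits from the drawing. If $x$ is the exit of a triangle, then $\beta(x)$ is the vertex $y'$ of the left neighbouring triangle whose bridge neighbour lies to its right, so $\beta(x)$ is never an exit and is a codeword. Consequently, for every non-exit vertex $v$, the vertex $\beta(v)$ is either a non-exit (when $v=z$) or an exit (exactly when $v=y$, i.e.\ when $\beta(v)$ is the exit $x'$ of the triangle to the right).

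Next I would list all the sets $I(v)$. For a triangle $\{x,y,z\}$ with exit $x$:
\begin{itemize}
\item $I(x)=\{y,z,\beta(x)\}$, which has size $3$;
\item $I(z)=\{y,z,\beta(z)\}$, which has size $3$ because $\beta(z)$ is a non-exit codeword;
\item $I(y)=\{y,z\}$, which has size $2$ because $\beta(y)$ is an exit and hence a non-codeword.
\end{itemize}
All of these sets are nonempty. For distinctness, $I(v)$ always contains the two codewords $y,z$ of $T(v)$, and these identify the triangle of $v$ whenever $|I(v)|\ge 2$: two codewords that are adjacent lie in a common triangle only if they belong to the same triangle, while a codeword and its bridge partner are not both in one triangle. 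So two vertices with equal sets $I$ lie in the same triangle. Within a triangle, $I(y)$ has size $2$ while $I(x)$ and $I(z)$ have size $3$, and $I(x)\neq I(z)$ because $\beta(x)\neq\beta(z)$.

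The main obstacle is being sure the pair $\{y,z\}$ cannot coincide with a pair from another triangle, and that the parity claims about which bridge neighbours are exits actually match the drawing. In particular, I would check that the vertical edges join two $z$-type vertices, and that a non-vertical edge always joins an exit to a $y$-vertex. This orientation consistency is what the figure encodes, and I would verify it by describing the grid with coordinates, where the triangles sit at the vertices of the hexagonal lattice and the edge directions are fixed.
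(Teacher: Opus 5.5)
Your proposal is correct and follows the paper's proof: you get the lower bound from Theorem~\ref{thm:3cycle}, take the code of all non-exits, compute the same sets $I(x)=\{y,z,\beta(x)\}$, $I(y)=\{y,z\}$, $I(z)=\{y,z,\beta(z)\}$, and use that the two codewords $y,z$ of $T(v)$ lying in $I(v)$ determine $T(v)$. The paper states that last step more directly: since the bridge neighbours of the vertices of a triangle are distinct, $N[y]\cap N[z]=T(v)$, so $I(v)=I(v')$ forces $v'\in T(v)$, and this spares you from checking that $\{y,z\}$ is the only pair of $I(v)$ lying on a triangle.
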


\begin{proof}
The truncated hexagonal grid is cubic, and every vertex lies on exactly one triangle, so Theorem~\ref{thm:3cycle} gives the lower bound.
Conversely, let $C$ be the set of all vertices that are not exits, as in Figure~\ref{fig:trhex}. Every triangle contains exactly one exit, so $C$ has density $2/3$. Let $T=\{x,y,z\}$ be a triangle, named as above. Then $x$ is the exit of $T$ and $\beta(y)$ is the exit of the triangle to the right of $T$, while $\beta(x)$ and $\beta(z)$ are not exits. Hence
\[
I(x)=\{y,z,\beta(x)\},\qquad I(y)=\{y,z\},\qquad I(z)=\{y,z,\beta(z)\}.
\]
These sets are nonempty and pairwise distinct, since $\beta(x)\neq \beta(z)$. Every set $I(v)$ contains two vertices of $T(v)$, and the only vertices adjacent to two vertices of a triangle $T$ are the vertices of $T$, since the bridge neighbors of the vertices of $T$ are distinct. Hence $I(v)$ determines $T(v)$, and therefore $v$, and $C$ is an idcode.
\end{proof}

The bound $2/3$ is much larger than the general bound $2/5$: the triangles force many codewords, and the argument above only uses the separation of adjacent vertices.

\section{The truncated square grid}\label{sec:tsq}

The truncated square tiling has vertex configuration $4.8.8$ (see Figure~\ref{fig:code}). It is the truncation $\operatorname{tr}(4^4)$: every vertex of the square grid is replaced by a $4$-cycle. The grid is cubic, so the general lower bound is only $2/5$. In this section we prove that the minimum density is $3/7$, which is item~(c) of Theorem~\ref{thm:main}.

We first prove a general lower bound for every twin-free cubic graph in which every vertex lies on exactly one $4$-cycle, such as the truncation of any $4$-regular graph; in Section~\ref{sec:remaining} we also apply it to the truncated trihexagonal grid $4.6.12=\operatorname{tr}(3.6.3.6)$. In such a graph, the \emph{square} of a vertex $v$ is the $4$-cycle containing it. A square has no chord, since a chord $v_1v_3$ of a square $v_1,v_2,v_3,v_4$ would give $N[v_1]=N[v_3]$. Thus the two neighbors of $v$ on its square are its \emph{square neighbors}, and the third neighbor of $v$ is its \emph{bridge neighbor} $\beta(v)$. The edge $v\beta(v)$ lies on no $4$-cycle, and $\beta(\beta(v))=v$. If $v_1,v_2,v_3,v_4$ is a square in cyclic order, then the square neighbors of $v_1$ and $v_3$ are $v_2$ and $v_4$, and vice versa.

\begin{theorem}\label{thm:4cycle}
Every idcode of a cubic graph with the slow growth property in which every vertex lies on exactly one $4$-cycle has density at least $3/7$.
\end{theorem}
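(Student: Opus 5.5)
We would argue by discharging on \emph{squares} rather than on vertices. The squares partition $V$, every square $S=v_1v_2v_3v_4$ has four bridge edges $v_i\beta(v_i)$, and $\beta$ is an involution. For a square $S$, let $a(S)=|C\cap S|$ and $b(S)=|\{i:\beta(v_i)\in C\}|$. Every codeword $c$ is counted once in $a$ of its own square and once in $b$ of the square of $\beta(c)$. So if every codeword gives charge $1/2$ to each vertex of its own square's ``account'' and $1/2$ to the square of $\beta(c)$, the square $S$ receives $(a(S)+b(S))/2$. Concretely, each codeword sends $1/8$ to each vertex of its square and $1/8$ to each vertex of the square of $\beta(c)$; all charge moves distance at most $3$. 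Theorem~\ref{thm:discharging} then reduces the claim to showing that, after a further redistribution between nearby squares, every square has average $a+b\ge 24/7$.

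\textbf{Step 1: local constraints.} Using chordlessness, $N[v_1]=\{v_1,v_2,v_4,\beta(v_1)\}$. We list the separation conditions:
\begin{itemize}
\item for the opposite pair $v_1,v_3$, one of $v_1,v_3,\beta(v_1),\beta(v_3)$ is in $C$;
\item for the adjacent pair $v_1,v_2$, one of $v_3,v_4,\beta(v_1),\beta(v_2)$ is in $C$;
\item domination of each $v_i$;
\item distinctness of the sets $I(v_i)$ from each other and from the sets $I$ of the bridge neighbors.
\end{itemize}
From these we derive $a+b\ge 3$ always. For example, $a=0$ forces all four $\beta(v_i)\in C$ by domination, so $a+b=4$. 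If $a=1$ with $v_1\in C$, domination of $v_3$ forces $\beta(v_3)\in C$, and the three conditions $\beta(v_2)\vee\beta(v_4)$, $\beta(v_1)\vee\beta(v_2)$, $\beta(v_1)\vee\beta(v_4)$ need two more, so $a+b\ge 4$. Thus the only \emph{poor} squares, with $a+b=3<24/7$, have $a=2,b=1$ or $a=3,b=0$. We classify these configurations up to symmetry: the two codewords are adjacent or opposite, and the single codeword bridge sits in a determined position.

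\textbf{Step 2: neighbouring squares of a poor square.} For each poor configuration we identify, via the identification conditions involving vertices outside $S$, neighbouring squares (across bridge edges) that are forced to be \emph{rich}, meaning $a+b\ge 4$. For instance, when $a=3$ and $b=0$, the non-codeword $v_j$ has $I(v_j)$ equal to two codewords of $S$, while the codeword opposite it has $I$ equal to three codewords of $S$ plus nothing more. The bridge neighbors of the codewords are non-codewords whose sets $I$ must be separated from these, which forces codewords in their squares. The discharging rule would be: every poor square receives $3/14$ from each of two designated rich squares across its bridges (or $1/7$ from three). The designated squares are those adjacent via a bridge edge whose endpoint in the poor square is a codeword, since that codeword already contributes to the neighbour's $b$.

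\textbf{Step 3: main obstacle.} The hard part is showing that a rich square never gives away more than its surplus $a+b-24/7$. A square with $a+b=4$ has surplus only $4/7$, so it can support at most two poor neighbours at $3/14$ each. We must show, by case analysis on the position of the codewords of the rich square and of its poor neighbours, that a square with $a+b=4$ is never adjacent in the designated way to three poor squares, and that squares adjacent to more poor squares have larger $a+b$. We would first try the rule ``receive $3/7$ split equally among all bridge-adjacent squares $S'$ whose connecting bridge edge has its $S'$-endpoint in $C$''. We would then check the tight configurations, using that $4.8.8$ presumably admits a periodic code attaining $3/7$ to guide which squares must give. If equal splitting fails, we would refine the rule by weighting according to whether the receiving poor square has $a=2$ or $a=3$.
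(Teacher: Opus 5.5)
\textbf{Verdict: there is a genuine gap.} Your square-level setup is a legitimate alternative to the paper's vertex-level discharging, but the step that carries all the difficulty is never done, and the concrete pieces you offer for it fail.

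\textbf{What is correct.} Each codeword is counted once in some $a$ and once in some $b$, so the goal is an average of $a+b$ equal to $24/7$. Your Step~1 is right: always $a+b\ge 3$, and the poor squares are exactly $(a,b)=(2,1)$, with adjacent or opposite codewords, and $(a,b)=(3,0)$.

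\textbf{What fails in Steps~2--3.} These steps are left as a programme, and the specific claims are wrong.
\begin{itemize}
\item Your Step~3 rule pays a poor square only across bridges whose far endpoint is a codeword. This is the opposite of your Step~2 designation. Under it, a square with $a=3$, $b=0$ receives nothing, and such squares already occur in the optimal code of Figure~\ref{fig:code}.
\item The Step~2 claim that a $(3,0)$ square forces codewords, and rich squares, beyond the bridges of its codewords is false. With $v_1,v_2,v_3\in C$ and $v_4\notin C$ you have $I(v_1)=\{v_1,v_2\}$, $I(v_2)=\{v_1,v_2,v_3\}$ and $I(v_4)=\{v_1,v_3\}$, so $I(\beta(v_1))=\{v_1\}$ is allowed. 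When $\beta(v_1)$ does have a codeword square neighbour, its square can be poor: of the opposite type, with $\beta(v_1)$ between its two codewords and $v_1$ as its only codeword bridge neighbour.
\item Your intended check, that a square with $a+b=4$ is never designated by three poor squares, is also false. In Figure~\ref{fig:code}, some such squares are designated in your Step~2 sense by three, and even four, poor squares.
\end{itemize}

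\textbf{The strategy itself cannot work.} No rule in which poor squares draw only from bridge-adjacent rich squares can succeed. Work in $4.8.8$.
\begin{itemize}
\item Let $Q=v_1v_2v_3v_4$ be a square with $v_1,v_2,v_3\in C$, $v_4\notin C$, and no codeword bridge neighbour.
\item For $i\le 3$, let the codewords of the square of $\beta(v_i)$ be exactly the two square neighbours of $\beta(v_i)$, and let $v_i$ be the only codeword among the bridge neighbours of that square.
\item Let $\beta(v_4)$ be the only non-codeword of its square, and let that square have no codeword bridge neighbour.
\item Put every vertex not yet constrained into $C$.
\end{itemize}
One checks that this is an idcode, and that $Q$ and all four squares joined to $Q$ by bridges are poor. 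So charge must pass through poor squares, with amounts governed by local structure rather than by richness.

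\textbf{A square-level rule that does work.} Across each bridge $c\beta(c)$ with $c\in C$ and $\beta(c)\notin C$, let the square of $\beta(c)$ return to the square of $c$ the amount $3/14$ if $\beta(c)$ has a codeword square neighbour, and $1/14$ otherwise. This is the paper's rule (R1) summed over squares: the codeword really sends $2/7$ or $3/7$ across its bridge, not $1/2$. That every square then ends with at least $12/7$ (effective $a+b\ge 24/7$) is exactly what the paper proves at the vertex level:
\begin{itemize}
\item codewords start with charge $7$;
\item each non-codeword receives exactly $3$ by (R1)--(R2);
\item a case analysis on the square of a codeword $u$ shows that $u$ keeps at least $3$. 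The cases are according to which of $\beta(u)$, the other vertices of its square, and their bridge neighbours are codewords.
\end{itemize}
Your proposal contains no substitute for this case analysis, so as it stands it does not prove the theorem.
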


\begin{proof}
Let $G$ be such a graph and let $C$ be an idcode of $G$. Every codeword starts with charge $7$ and all other vertices with charge $0$. For a vertex $v$, let $S(v)$ be the set of square neighbors of $v$ that belong to $C$. In the discharging phase, each codeword $u$ sends charge to its neighbors according to the following rules.
\begin{itemize}
\item[(R1)] If $b=\beta(u)\notin C$, the codeword $u$ sends charge $3$ or $2$ to $b$, depending on whether $S(b)=\emptyset$ or not.
\item[(R2)] Let $s$ be a square neighbor of $u$, and let $\ell=|S(s)|\ge 1$. If $s\notin C$, then $u$ sends $1/\ell$ or $3/\ell$ to $s$, depending on whether $\beta(s)\in C$ or not. If $s\in C$ and $\beta(s)\notin C$, then $u$ sends $1/\ell$ to $s$.
\end{itemize}
In words, a vertex is served first by itself, then by its bridge neighbor, and its square neighbors complete what is missing in equal parts. A codeword $u$ receives nothing if $\beta(u)\in C$ or $S(u)=\emptyset$. Otherwise it receives $1/|S(u)|$ from each vertex of $S(u)$, hence $1$ in total. A vertex $v\notin C$ receives exactly $3$. Indeed, if $\beta(v)\in C$, it receives $3$ from $\beta(v)$ when $S(v)=\emptyset$, and otherwise $2$ from $\beta(v)$ and $1/|S(v)|$ from each vertex of $S(v)$, hence $2+1=3$ in total. If $\beta(v)\notin C$, it receives $3/|S(v)|$ from each vertex of $S(v)$, which is nonempty because $C$ is dominating.

It remains to show that every codeword ends with charge at least $3$. Let $u\in C$, let $u,s_1,w,s_2$ be its square in cyclic order, let $b=\beta(u)$ and $b_i=\beta(s_i)$ for $i=1,2$. Denote by $q_b$, $q_1$, $q_2$ the charges that $u$ sends to $b$, $s_1$, $s_2$. The final charge of $u$ is $7-q_b-q_1-q_2+\delta$, where $\delta=1$ if $b\notin C$ and at least one of $s_1,s_2$ is in $C$, and $\delta=0$ otherwise. So we must show that $q_b+q_1+q_2\le 4+\delta$.

By (R1), $q_b=0$ if $b\in C$. If $b\notin C$, then $q_b=3$ when $I(b)=\{u\}$ and $q_b=2$ otherwise. The square neighbors of $s_i$ are $u$ and $w$. If $s_i\in C$, rule (R2) gives $q_i=0$ when $b_i\in C$ and $q_i\le 1$ in any case. If $s_i\notin C$, rule (R2) gives $q_i=3$, $3/2$, $1$ or $1/2$ according to whether $I(s_i)$ is $\{u\}$, $\{u,w\}$, $\{u,b_i\}$ or $\{u,w,b_i\}$. Thus $q_i\le 3$ always, $q_i\le 3/2$ when $w\in C$, and $q_i\le 1$ when $b_i\in C$.

Suppose first that $b\in C$, so that $\delta=q_b=0$ and we must show $q_1+q_2\le 4$. If some $s_i$ is a codeword, then $q_i\le 1$ and $q_{3-i}\le 3$. If $s_1,s_2\notin C$ and $w\in C$, then $q_1+q_2\le 3$. If $s_1,s_2,w\notin C$, then $I(s_i)=\{u\}\cup(\{b_i\}\cap C)$, and $I(s_1)\neq I(s_2)$ puts one of $b_1,b_2$ in $C$, so $q_1+q_2\le 3+1$.

Suppose now that $b\notin C$. If $s_1,s_2\in C$, then $\delta=1$ and $q_b+q_1+q_2\le 3+1+1=5$. If $s_1,s_2\notin C$, then $\delta=0$ and $I(u)=\{u\}$, so $I(b)\neq I(u)$ gives $q_b=2$. When $w\notin C$ we have $I(s_i)=\{u\}\cup(\{b_i\}\cap C)$, and $I(s_i)\neq I(u)$ forces $b_i\in C$, so $q_1=q_2=1$. When $w\in C$ we have $I(s_i)=\{u,w\}\cup(\{b_i\}\cap C)$, and $I(s_1)\neq I(s_2)$ puts one of $b_1,b_2$ in $C$, so $q_1+q_2\le 3/2+1/2$. In both cases the sum is at most $4$.

It remains the case where exactly one of $s_1,s_2$, say $s_1$, is a codeword. Then $\delta=1$ and $I(u)=\{u,s_1\}$, and we must show that the sum is at most $5$. If $w\in C$, then $q_1\le 1/2$ and $q_2\le 3/2$, and the sum is at most $3+1/2+3/2=5$. If $w\notin C$ and $b_2\in C$, then $q_1,q_2\le 1$ and the sum is at most $5$. Finally, suppose $w,b_2\notin C$. Then $I(s_2)=\{u\}$, so $q_2=3$, and $I(b)\neq I(s_2)$ gives $q_b=2$. Moreover $I(s_1)=\{u,s_1\}\cup(\{b_1\}\cap C)$, and $I(s_1)\neq I(u)$ forces $b_1\in C$, so $q_1=0$ and the sum is exactly $5$.

Hence every vertex ends with charge at least $3$. Every codeword started with charge $7$ and charge only moves between adjacent vertices, so Theorem~\ref{thm:discharging} gives density at least $3/7$.
\end{proof}

For example, Theorem~\ref{thm:4cycle} applies to the truncation $\operatorname{tr}(3.4.6.4)$, a non-Archimedean cubic grid in which every vertex lies on exactly one $4$-cycle and has configuration $4.6.8$ or $4.8.12$. We now turn to the truncated square grid $4.8.8$.

\begin{figure}[t]
\centering
\begin{tikzpicture}[scale=0.46]
\fill[gray!18] (-4,-2) rectangle (-3,-1);
\fill[gray!18] (-4,0) rectangle (-3,1);
\fill[gray!18] (-4,2) rectangle (-3,3);
\fill[gray!18] (-4,4) rectangle (-3,5);
\fill[gray!18] (-4,6) rectangle (-3,7);
\fill[gray!18] (-4,8) rectangle (-3,9);
\fill[gray!18] (-4,10) rectangle (-3,11);
\fill[gray!18] (-2,-1) rectangle (-1,0);
\fill[gray!18] (-2,1) rectangle (-1,2);
\fill[gray!18] (-2,3) rectangle (-1,4);
\fill[gray!18] (-2,5) rectangle (-1,6);
\fill[gray!18] (-2,7) rectangle (-1,8);
\fill[gray!18] (-2,9) rectangle (-1,10);
\fill[gray!18] (-2,11) rectangle (-1,12);
\fill[gray!18] (0,-2) rectangle (1,-1);
\fill[gray!18] (0,0) rectangle (1,1);
\fill[gray!18] (0,2) rectangle (1,3);
\fill[gray!18] (0,4) rectangle (1,5);
\fill[gray!18] (0,6) rectangle (1,7);
\fill[gray!18] (0,8) rectangle (1,9);
\fill[gray!18] (0,10) rectangle (1,11);
\fill[gray!18] (2,-1) rectangle (3,0);
\fill[gray!18] (2,1) rectangle (3,2);
\fill[gray!18] (2,3) rectangle (3,4);
\fill[gray!18] (2,5) rectangle (3,6);
\fill[gray!18] (2,7) rectangle (3,8);
\fill[gray!18] (2,9) rectangle (3,10);
\fill[gray!18] (2,11) rectangle (3,12);
\fill[gray!18] (4,-2) rectangle (5,-1);
\fill[gray!18] (4,0) rectangle (5,1);
\fill[gray!18] (4,2) rectangle (5,3);
\fill[gray!18] (4,4) rectangle (5,5);
\fill[gray!18] (4,6) rectangle (5,7);
\fill[gray!18] (4,8) rectangle (5,9);
\fill[gray!18] (4,10) rectangle (5,11);
\fill[gray!18] (6,-1) rectangle (7,0);
\fill[gray!18] (6,1) rectangle (7,2);
\fill[gray!18] (6,3) rectangle (7,4);
\fill[gray!18] (6,5) rectangle (7,6);
\fill[gray!18] (6,7) rectangle (7,8);
\fill[gray!18] (6,9) rectangle (7,10);
\fill[gray!18] (6,11) rectangle (7,12);
\fill[gray!18] (8,-2) rectangle (9,-1);
\fill[gray!18] (8,0) rectangle (9,1);
\fill[gray!18] (8,2) rectangle (9,3);
\fill[gray!18] (8,4) rectangle (9,5);
\fill[gray!18] (8,6) rectangle (9,7);
\fill[gray!18] (8,8) rectangle (9,9);
\fill[gray!18] (8,10) rectangle (9,11);
\fill[gray!18] (10,-1) rectangle (11,0);
\fill[gray!18] (10,1) rectangle (11,2);
\fill[gray!18] (10,3) rectangle (11,4);
\fill[gray!18] (10,5) rectangle (11,6);
\fill[gray!18] (10,7) rectangle (11,8);
\fill[gray!18] (10,9) rectangle (11,10);
\fill[gray!18] (10,11) rectangle (11,12);
\draw (-4,-2) -- (11,-2);
\draw (-4,-1) -- (11,-1);
\draw (-4,0) -- (11,0);
\draw (-4,1) -- (11,1);
\draw (-4,2) -- (11,2);
\draw (-4,3) -- (11,3);
\draw (-4,4) -- (11,4);
\draw (-4,5) -- (11,5);
\draw (-4,6) -- (11,6);
\draw (-4,7) -- (11,7);
\draw (-4,8) -- (11,8);
\draw (-4,9) -- (11,9);
\draw (-4,10) -- (11,10);
\draw (-4,11) -- (11,11);
\draw (-4,12) -- (11,12);
\draw (-4,-2) -- (-4,-1);
\draw (-4,0) -- (-4,1);
\draw (-4,2) -- (-4,3);
\draw (-4,4) -- (-4,5);
\draw (-4,6) -- (-4,7);
\draw (-4,8) -- (-4,9);
\draw (-4,10) -- (-4,11);
\draw (-3,-2) -- (-3,-1);
\draw (-3,0) -- (-3,1);
\draw (-3,2) -- (-3,3);
\draw (-3,4) -- (-3,5);
\draw (-3,6) -- (-3,7);
\draw (-3,8) -- (-3,9);
\draw (-3,10) -- (-3,11);
\draw (-2,-1) -- (-2,0);
\draw (-2,1) -- (-2,2);
\draw (-2,3) -- (-2,4);
\draw (-2,5) -- (-2,6);
\draw (-2,7) -- (-2,8);
\draw (-2,9) -- (-2,10);
\draw (-2,11) -- (-2,12);
\draw (-1,-1) -- (-1,0);
\draw (-1,1) -- (-1,2);
\draw (-1,3) -- (-1,4);
\draw (-1,5) -- (-1,6);
\draw (-1,7) -- (-1,8);
\draw (-1,9) -- (-1,10);
\draw (-1,11) -- (-1,12);
\draw (0,-2) -- (0,-1);
\draw (0,0) -- (0,1);
\draw (0,2) -- (0,3);
\draw (0,4) -- (0,5);
\draw (0,6) -- (0,7);
\draw (0,8) -- (0,9);
\draw (0,10) -- (0,11);
\draw (1,-2) -- (1,-1);
\draw (1,0) -- (1,1);
\draw (1,2) -- (1,3);
\draw (1,4) -- (1,5);
\draw (1,6) -- (1,7);
\draw (1,8) -- (1,9);
\draw (1,10) -- (1,11);
\draw (2,-1) -- (2,0);
\draw (2,1) -- (2,2);
\draw (2,3) -- (2,4);
\draw (2,5) -- (2,6);
\draw (2,7) -- (2,8);
\draw (2,9) -- (2,10);
\draw (2,11) -- (2,12);
\draw (3,-1) -- (3,0);
\draw (3,1) -- (3,2);
\draw (3,3) -- (3,4);
\draw (3,5) -- (3,6);
\draw (3,7) -- (3,8);
\draw (3,9) -- (3,10);
\draw (3,11) -- (3,12);
\draw (4,-2) -- (4,-1);
\draw (4,0) -- (4,1);
\draw (4,2) -- (4,3);
\draw (4,4) -- (4,5);
\draw (4,6) -- (4,7);
\draw (4,8) -- (4,9);
\draw (4,10) -- (4,11);
\draw (5,-2) -- (5,-1);
\draw (5,0) -- (5,1);
\draw (5,2) -- (5,3);
\draw (5,4) -- (5,5);
\draw (5,6) -- (5,7);
\draw (5,8) -- (5,9);
\draw (5,10) -- (5,11);
\draw (6,-1) -- (6,0);
\draw (6,1) -- (6,2);
\draw (6,3) -- (6,4);
\draw (6,5) -- (6,6);
\draw (6,7) -- (6,8);
\draw (6,9) -- (6,10);
\draw (6,11) -- (6,12);
\draw (7,-1) -- (7,0);
\draw (7,1) -- (7,2);
\draw (7,3) -- (7,4);
\draw (7,5) -- (7,6);
\draw (7,7) -- (7,8);
\draw (7,9) -- (7,10);
\draw (7,11) -- (7,12);
\draw (8,-2) -- (8,-1);
\draw (8,0) -- (8,1);
\draw (8,2) -- (8,3);
\draw (8,4) -- (8,5);
\draw (8,6) -- (8,7);
\draw (8,8) -- (8,9);
\draw (8,10) -- (8,11);
\draw (9,-2) -- (9,-1);
\draw (9,0) -- (9,1);
\draw (9,2) -- (9,3);
\draw (9,4) -- (9,5);
\draw (9,6) -- (9,7);
\draw (9,8) -- (9,9);
\draw (9,10) -- (9,11);
\draw (10,-1) -- (10,0);
\draw (10,1) -- (10,2);
\draw (10,3) -- (10,4);
\draw (10,5) -- (10,6);
\draw (10,7) -- (10,8);
\draw (10,9) -- (10,10);
\draw (10,11) -- (10,12);
\draw (11,-1) -- (11,0);
\draw (11,1) -- (11,2);
\draw (11,3) -- (11,4);
\draw (11,5) -- (11,6);
\draw (11,7) -- (11,8);
\draw (11,9) -- (11,10);
\draw (11,11) -- (11,12);
\draw[red, very thick, dashed] (-0.45,-0.45) rectangle (3.45,6.45);
\fill (-4,-2) circle (5pt);
\fill[white] (-4,-1) circle (2.6pt); \draw (-4,-1) circle (2.6pt);
\fill[white] (-4,0) circle (2.6pt); \draw (-4,0) circle (2.6pt);
\fill[white] (-4,1) circle (2.6pt); \draw (-4,1) circle (2.6pt);
\fill[white] (-4,2) circle (2.6pt); \draw (-4,2) circle (2.6pt);
\fill[white] (-4,3) circle (2.6pt); \draw (-4,3) circle (2.6pt);
\fill[white] (-4,4) circle (2.6pt); \draw (-4,4) circle (2.6pt);
\fill[white] (-4,5) circle (2.6pt); \draw (-4,5) circle (2.6pt);
\fill[white] (-4,6) circle (2.6pt); \draw (-4,6) circle (2.6pt);
\fill (-4,7) circle (5pt);
\fill[white] (-4,8) circle (2.6pt); \draw (-4,8) circle (2.6pt);
\fill (-4,9) circle (5pt);
\fill (-4,10) circle (5pt);
\fill[white] (-4,11) circle (2.6pt); \draw (-4,11) circle (2.6pt);
\fill (-4,12) circle (5pt);
\fill (-3,-2) circle (5pt);
\fill[white] (-3,-1) circle (2.6pt); \draw (-3,-1) circle (2.6pt);
\fill (-3,0) circle (5pt);
\fill[white] (-3,1) circle (2.6pt); \draw (-3,1) circle (2.6pt);
\fill[white] (-3,2) circle (2.6pt); \draw (-3,2) circle (2.6pt);
\fill[white] (-3,3) circle (2.6pt); \draw (-3,3) circle (2.6pt);
\fill[white] (-3,4) circle (2.6pt); \draw (-3,4) circle (2.6pt);
\fill (-3,5) circle (5pt);
\fill[white] (-3,6) circle (2.6pt); \draw (-3,6) circle (2.6pt);
\fill (-3,7) circle (5pt);
\fill (-3,8) circle (5pt);
\fill (-3,9) circle (5pt);
\fill (-3,10) circle (5pt);
\fill (-3,11) circle (5pt);
\fill (-3,12) circle (5pt);
\fill[white] (-2,-2) circle (2.6pt); \draw (-2,-2) circle (2.6pt);
\fill[white] (-2,-1) circle (2.6pt); \draw (-2,-1) circle (2.6pt);
\fill (-2,0) circle (5pt);
\fill[white] (-2,1) circle (2.6pt); \draw (-2,1) circle (2.6pt);
\fill (-2,2) circle (5pt);
\fill (-2,3) circle (5pt);
\fill[white] (-2,4) circle (2.6pt); \draw (-2,4) circle (2.6pt);
\fill (-2,5) circle (5pt);
\fill[white] (-2,6) circle (2.6pt); \draw (-2,6) circle (2.6pt);
\fill[white] (-2,7) circle (2.6pt); \draw (-2,7) circle (2.6pt);
\fill[white] (-2,8) circle (2.6pt); \draw (-2,8) circle (2.6pt);
\fill[white] (-2,9) circle (2.6pt); \draw (-2,9) circle (2.6pt);
\fill[white] (-2,10) circle (2.6pt); \draw (-2,10) circle (2.6pt);
\fill[white] (-2,11) circle (2.6pt); \draw (-2,11) circle (2.6pt);
\fill[white] (-2,12) circle (2.6pt); \draw (-2,12) circle (2.6pt);
\fill (-1,-2) circle (5pt);
\fill[white] (-1,-1) circle (2.6pt); \draw (-1,-1) circle (2.6pt);
\fill (-1,0) circle (5pt);
\fill (-1,1) circle (5pt);
\fill (-1,2) circle (5pt);
\fill (-1,3) circle (5pt);
\fill (-1,4) circle (5pt);
\fill (-1,5) circle (5pt);
\fill[white] (-1,6) circle (2.6pt); \draw (-1,6) circle (2.6pt);
\fill (-1,7) circle (5pt);
\fill[white] (-1,8) circle (2.6pt); \draw (-1,8) circle (2.6pt);
\fill[white] (-1,9) circle (2.6pt); \draw (-1,9) circle (2.6pt);
\fill[white] (-1,10) circle (2.6pt); \draw (-1,10) circle (2.6pt);
\fill[white] (-1,11) circle (2.6pt); \draw (-1,11) circle (2.6pt);
\fill (-1,12) circle (5pt);
\fill (0,-2) circle (5pt);
\fill[white] (0,-1) circle (2.6pt); \draw (0,-1) circle (2.6pt);
\fill[white] (0,0) circle (2.6pt); \draw (0,0) circle (2.6pt);
\fill[white] (0,1) circle (2.6pt); \draw (0,1) circle (2.6pt);
\fill[white] (0,2) circle (2.6pt); \draw (0,2) circle (2.6pt);
\fill[white] (0,3) circle (2.6pt); \draw (0,3) circle (2.6pt);
\fill[white] (0,4) circle (2.6pt); \draw (0,4) circle (2.6pt);
\fill[white] (0,5) circle (2.6pt); \draw (0,5) circle (2.6pt);
\fill[white] (0,6) circle (2.6pt); \draw (0,6) circle (2.6pt);
\fill (0,7) circle (5pt);
\fill[white] (0,8) circle (2.6pt); \draw (0,8) circle (2.6pt);
\fill (0,9) circle (5pt);
\fill (0,10) circle (5pt);
\fill[white] (0,11) circle (2.6pt); \draw (0,11) circle (2.6pt);
\fill (0,12) circle (5pt);
\fill (1,-2) circle (5pt);
\fill[white] (1,-1) circle (2.6pt); \draw (1,-1) circle (2.6pt);
\fill (1,0) circle (5pt);
\fill[white] (1,1) circle (2.6pt); \draw (1,1) circle (2.6pt);
\fill[white] (1,2) circle (2.6pt); \draw (1,2) circle (2.6pt);
\fill[white] (1,3) circle (2.6pt); \draw (1,3) circle (2.6pt);
\fill[white] (1,4) circle (2.6pt); \draw (1,4) circle (2.6pt);
\fill (1,5) circle (5pt);
\fill[white] (1,6) circle (2.6pt); \draw (1,6) circle (2.6pt);
\fill (1,7) circle (5pt);
\fill (1,8) circle (5pt);
\fill (1,9) circle (5pt);
\fill (1,10) circle (5pt);
\fill (1,11) circle (5pt);
\fill (1,12) circle (5pt);
\fill[white] (2,-2) circle (2.6pt); \draw (2,-2) circle (2.6pt);
\fill[white] (2,-1) circle (2.6pt); \draw (2,-1) circle (2.6pt);
\fill (2,0) circle (5pt);
\fill[white] (2,1) circle (2.6pt); \draw (2,1) circle (2.6pt);
\fill (2,2) circle (5pt);
\fill (2,3) circle (5pt);
\fill[white] (2,4) circle (2.6pt); \draw (2,4) circle (2.6pt);
\fill (2,5) circle (5pt);
\fill[white] (2,6) circle (2.6pt); \draw (2,6) circle (2.6pt);
\fill[white] (2,7) circle (2.6pt); \draw (2,7) circle (2.6pt);
\fill[white] (2,8) circle (2.6pt); \draw (2,8) circle (2.6pt);
\fill[white] (2,9) circle (2.6pt); \draw (2,9) circle (2.6pt);
\fill[white] (2,10) circle (2.6pt); \draw (2,10) circle (2.6pt);
\fill[white] (2,11) circle (2.6pt); \draw (2,11) circle (2.6pt);
\fill[white] (2,12) circle (2.6pt); \draw (2,12) circle (2.6pt);
\fill (3,-2) circle (5pt);
\fill[white] (3,-1) circle (2.6pt); \draw (3,-1) circle (2.6pt);
\fill (3,0) circle (5pt);
\fill (3,1) circle (5pt);
\fill (3,2) circle (5pt);
\fill (3,3) circle (5pt);
\fill (3,4) circle (5pt);
\fill (3,5) circle (5pt);
\fill[white] (3,6) circle (2.6pt); \draw (3,6) circle (2.6pt);
\fill (3,7) circle (5pt);
\fill[white] (3,8) circle (2.6pt); \draw (3,8) circle (2.6pt);
\fill[white] (3,9) circle (2.6pt); \draw (3,9) circle (2.6pt);
\fill[white] (3,10) circle (2.6pt); \draw (3,10) circle (2.6pt);
\fill[white] (3,11) circle (2.6pt); \draw (3,11) circle (2.6pt);
\fill (3,12) circle (5pt);
\fill (4,-2) circle (5pt);
\fill[white] (4,-1) circle (2.6pt); \draw (4,-1) circle (2.6pt);
\fill[white] (4,0) circle (2.6pt); \draw (4,0) circle (2.6pt);
\fill[white] (4,1) circle (2.6pt); \draw (4,1) circle (2.6pt);
\fill[white] (4,2) circle (2.6pt); \draw (4,2) circle (2.6pt);
\fill[white] (4,3) circle (2.6pt); \draw (4,3) circle (2.6pt);
\fill[white] (4,4) circle (2.6pt); \draw (4,4) circle (2.6pt);
\fill[white] (4,5) circle (2.6pt); \draw (4,5) circle (2.6pt);
\fill[white] (4,6) circle (2.6pt); \draw (4,6) circle (2.6pt);
\fill (4,7) circle (5pt);
\fill[white] (4,8) circle (2.6pt); \draw (4,8) circle (2.6pt);
\fill (4,9) circle (5pt);
\fill (4,10) circle (5pt);
\fill[white] (4,11) circle (2.6pt); \draw (4,11) circle (2.6pt);
\fill (4,12) circle (5pt);
\fill (5,-2) circle (5pt);
\fill[white] (5,-1) circle (2.6pt); \draw (5,-1) circle (2.6pt);
\fill (5,0) circle (5pt);
\fill[white] (5,1) circle (2.6pt); \draw (5,1) circle (2.6pt);
\fill[white] (5,2) circle (2.6pt); \draw (5,2) circle (2.6pt);
\fill[white] (5,3) circle (2.6pt); \draw (5,3) circle (2.6pt);
\fill[white] (5,4) circle (2.6pt); \draw (5,4) circle (2.6pt);
\fill (5,5) circle (5pt);
\fill[white] (5,6) circle (2.6pt); \draw (5,6) circle (2.6pt);
\fill (5,7) circle (5pt);
\fill (5,8) circle (5pt);
\fill (5,9) circle (5pt);
\fill (5,10) circle (5pt);
\fill (5,11) circle (5pt);
\fill (5,12) circle (5pt);
\fill[white] (6,-2) circle (2.6pt); \draw (6,-2) circle (2.6pt);
\fill[white] (6,-1) circle (2.6pt); \draw (6,-1) circle (2.6pt);
\fill (6,0) circle (5pt);
\fill[white] (6,1) circle (2.6pt); \draw (6,1) circle (2.6pt);
\fill (6,2) circle (5pt);
\fill (6,3) circle (5pt);
\fill[white] (6,4) circle (2.6pt); \draw (6,4) circle (2.6pt);
\fill (6,5) circle (5pt);
\fill[white] (6,6) circle (2.6pt); \draw (6,6) circle (2.6pt);
\fill[white] (6,7) circle (2.6pt); \draw (6,7) circle (2.6pt);
\fill[white] (6,8) circle (2.6pt); \draw (6,8) circle (2.6pt);
\fill[white] (6,9) circle (2.6pt); \draw (6,9) circle (2.6pt);
\fill[white] (6,10) circle (2.6pt); \draw (6,10) circle (2.6pt);
\fill[white] (6,11) circle (2.6pt); \draw (6,11) circle (2.6pt);
\fill[white] (6,12) circle (2.6pt); \draw (6,12) circle (2.6pt);
\fill (7,-2) circle (5pt);
\fill[white] (7,-1) circle (2.6pt); \draw (7,-1) circle (2.6pt);
\fill (7,0) circle (5pt);
\fill (7,1) circle (5pt);
\fill (7,2) circle (5pt);
\fill (7,3) circle (5pt);
\fill (7,4) circle (5pt);
\fill (7,5) circle (5pt);
\fill[white] (7,6) circle (2.6pt); \draw (7,6) circle (2.6pt);
\fill (7,7) circle (5pt);
\fill[white] (7,8) circle (2.6pt); \draw (7,8) circle (2.6pt);
\fill[white] (7,9) circle (2.6pt); \draw (7,9) circle (2.6pt);
\fill[white] (7,10) circle (2.6pt); \draw (7,10) circle (2.6pt);
\fill[white] (7,11) circle (2.6pt); \draw (7,11) circle (2.6pt);
\fill (7,12) circle (5pt);
\fill (8,-2) circle (5pt);
\fill[white] (8,-1) circle (2.6pt); \draw (8,-1) circle (2.6pt);
\fill[white] (8,0) circle (2.6pt); \draw (8,0) circle (2.6pt);
\fill[white] (8,1) circle (2.6pt); \draw (8,1) circle (2.6pt);
\fill[white] (8,2) circle (2.6pt); \draw (8,2) circle (2.6pt);
\fill[white] (8,3) circle (2.6pt); \draw (8,3) circle (2.6pt);
\fill[white] (8,4) circle (2.6pt); \draw (8,4) circle (2.6pt);
\fill[white] (8,5) circle (2.6pt); \draw (8,5) circle (2.6pt);
\fill[white] (8,6) circle (2.6pt); \draw (8,6) circle (2.6pt);
\fill (8,7) circle (5pt);
\fill[white] (8,8) circle (2.6pt); \draw (8,8) circle (2.6pt);
\fill (8,9) circle (5pt);
\fill (8,10) circle (5pt);
\fill[white] (8,11) circle (2.6pt); \draw (8,11) circle (2.6pt);
\fill (8,12) circle (5pt);
\fill (9,-2) circle (5pt);
\fill[white] (9,-1) circle (2.6pt); \draw (9,-1) circle (2.6pt);
\fill (9,0) circle (5pt);
\fill[white] (9,1) circle (2.6pt); \draw (9,1) circle (2.6pt);
\fill[white] (9,2) circle (2.6pt); \draw (9,2) circle (2.6pt);
\fill[white] (9,3) circle (2.6pt); \draw (9,3) circle (2.6pt);
\fill[white] (9,4) circle (2.6pt); \draw (9,4) circle (2.6pt);
\fill (9,5) circle (5pt);
\fill[white] (9,6) circle (2.6pt); \draw (9,6) circle (2.6pt);
\fill (9,7) circle (5pt);
\fill (9,8) circle (5pt);
\fill (9,9) circle (5pt);
\fill (9,10) circle (5pt);
\fill (9,11) circle (5pt);
\fill (9,12) circle (5pt);
\fill[white] (10,-2) circle (2.6pt); \draw (10,-2) circle (2.6pt);
\fill[white] (10,-1) circle (2.6pt); \draw (10,-1) circle (2.6pt);
\fill (10,0) circle (5pt);
\fill[white] (10,1) circle (2.6pt); \draw (10,1) circle (2.6pt);
\fill (10,2) circle (5pt);
\fill (10,3) circle (5pt);
\fill[white] (10,4) circle (2.6pt); \draw (10,4) circle (2.6pt);
\fill (10,5) circle (5pt);
\fill[white] (10,6) circle (2.6pt); \draw (10,6) circle (2.6pt);
\fill[white] (10,7) circle (2.6pt); \draw (10,7) circle (2.6pt);
\fill[white] (10,8) circle (2.6pt); \draw (10,8) circle (2.6pt);
\fill[white] (10,9) circle (2.6pt); \draw (10,9) circle (2.6pt);
\fill[white] (10,10) circle (2.6pt); \draw (10,10) circle (2.6pt);
\fill[white] (10,11) circle (2.6pt); \draw (10,11) circle (2.6pt);
\fill[white] (10,12) circle (2.6pt); \draw (10,12) circle (2.6pt);
\fill (11,-2) circle (5pt);
\fill[white] (11,-1) circle (2.6pt); \draw (11,-1) circle (2.6pt);
\fill (11,0) circle (5pt);
\fill (11,1) circle (5pt);
\fill (11,2) circle (5pt);
\fill (11,3) circle (5pt);
\fill (11,4) circle (5pt);
\fill (11,5) circle (5pt);
\fill[white] (11,6) circle (2.6pt); \draw (11,6) circle (2.6pt);
\fill (11,7) circle (5pt);
\fill[white] (11,8) circle (2.6pt); \draw (11,8) circle (2.6pt);
\fill[white] (11,9) circle (2.6pt); \draw (11,9) circle (2.6pt);
\fill[white] (11,10) circle (2.6pt); \draw (11,10) circle (2.6pt);
\fill[white] (11,11) circle (2.6pt); \draw (11,11) circle (2.6pt);
\fill (11,12) circle (5pt);
\end{tikzpicture}
\caption{An idcode of density $3/7$ of the truncated square grid $4.8.8$ (black vertices), drawn as a brick wall: the shaded $4$-cycles are the squares and all other faces are octagons. The code is periodic, and the dashed rectangle is a fundamental domain of its period lattice: it contains $28$ vertices, $12$ of which are codewords.}
\label{fig:code}
\end{figure}

\begin{theorem}\label{thm:tsq}
The minimum density of an idcode of the truncated square grid $4.8.8$ is $3/7$.
\end{theorem}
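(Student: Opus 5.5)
The lower bound is immediate from Theorem~\ref{thm:4cycle}. The truncated square grid is cubic and has the slow growth property. Every vertex lies on exactly one $4$-cycle: the shaded square of the brick-wall drawing. The other faces are octagons, and one checks directly that no other $4$-cycle exists. A $4$-cycle through a bridge edge would have to return through another square within two more steps, which is impossible since distinct squares are joined by a single edge. Hence every idcode has density at least $3/7$.

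For the upper bound, I would use the periodic code $C$ of Figure~\ref{fig:code}. Its density is $12/28=3/7$, since the dashed fundamental domain contains $28$ vertices, $12$ of them codewords. It remains to verify that $C$ is an idcode. By periodicity it suffices to check that $I(v)$ is nonempty for the $28$ vertices $v$ of the fundamental domain, and that $I(u)\neq I(v)$ for every pair $u,v$ with a common codeword neighbourhood. Since $I(u)=I(v)\neq\emptyset$ forces $d(u,v)\le 2$, only pairs at distance at most $2$ need to be compared.

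I would organize the verification by the structure used in the proof of Theorem~\ref{thm:4cycle}. For each vertex $v$, I record the triple consisting of whether $v\in C$, the set $S(v)$ of codeword square neighbours, and whether $\beta(v)\in C$.
\begin{itemize}
\item Two adjacent vertices $u,v$ in a square are separated exactly when $\beta(u)$ or $\beta(v)$ is a codeword, or one of the two opposite square vertices is.
\item Two vertices at distance $2$ in a square are separated unless their bridge neighbours and themselves all lie outside $C$.
\item Two vertices $v,\beta(v)$ are separated unless no square neighbour of either is a codeword.
\item Vertices in different squares at distance $2$ share at most one neighbour. So equality of $I$ would force $|I|=1$, and one only needs to check that no two vertices have the same singleton $I$.
\end{itemize}
This reduces the check to a small table over the seven squares of the fundamental domain and their bridge edges.

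The main obstacle is this finite verification. It is not conceptually hard, but it must be done carefully, tracing the brick-wall coordinates across the boundary of the fundamental domain. I would present it as a table listing $I(v)$ for each of the $28$ vertices, and observe that the listed sets are nonempty and distinct for vertices within distance $2$. A good consistency check is that $C$ should be tight for the discharging of Theorem~\ref{thm:4cycle}, with every vertex ending with charge exactly $3$. This means the local configurations around each codeword should be exactly the extremal cases identified there, which guides and cross-checks the table.
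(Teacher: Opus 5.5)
Your proposal is correct and follows the paper's proof: the lower bound comes from Theorem~\ref{thm:4cycle}, and the upper bound from the periodic code of Figure~\ref{fig:code}, of density $12/28=3/7$, checked locally for pairs at distance at most $2$. Your extra justifications are valid refinements of the paper's ``routine verification'': the argument that the squares are the only $4$-cycles, the four-case classification of pairs at distance at most $2$ (where pairs in different squares share at most one neighbour, so only distinct singletons need checking), and the remark that the code must be tight for rules (R1)--(R2).
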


\begin{proof}
The truncated square grid is twin-free and cubic, and every vertex lies on exactly one $4$-cycle, so Theorem~\ref{thm:4cycle} gives the lower bound. Conversely, the black vertices of Figure~\ref{fig:code} form a periodic set of density $12/28=3/7$. Since two vertices at distance at least $3$ have disjoint closed neighborhoods, by periodicity it suffices to check that $I(u)\neq\emptyset$ and $I(u)\neq I(v)$ for every vertex $u$ of the dashed rectangle and every vertex $v$ with $d(u,v)\le 2$, which is a routine verification with Figure~\ref{fig:code}.
\end{proof}

Every codeword of the code of Figure~\ref{fig:code} ends with charge exactly $3$, so this code is perfectly balanced for the rules (R1) and (R2). Note also that the proof of Theorem~\ref{thm:4cycle} only uses that the four vertices of $N[u]$ are pairwise separated and that $C$ is dominating. It would be interesting to know whether rules of this kind give good bounds for the hexagonal grid, which has a structure similar to the truncated square grid.

\section{The elongated triangular grid}\label{sec:elong}

The elongated triangular tiling has vertex configuration $3^3.4^2$ (see Figure~\ref{fig:elong}). Its graph is $5$-regular, so the general lower bound of~\cite{SSW24} gives only $2/7$. In this section we prove that the minimum density is $1/3$, which is item~(d) of Theorem~\ref{thm:main}.

The horizontal lines of the tiling bound alternately a row of squares and a row of triangles. We draw the grid as the square grid in which the squares of alternate rows are cut by a diagonal at $45^\circ$, so that these rows become the rows of triangles. Every edge lies on two faces, and there are three kinds of edges: the vertical edges of the rows of squares lie on two squares, the horizontal edges lie on a square and a triangle, and the other edges, in the rows of triangles, lie on two triangles. Accordingly, every vertex $v$ has exactly one neighbor joined to it by an edge lying on two squares, which we call its \emph{bridge neighbor} $\beta(v)$, two \emph{outer} neighbors, which are its neighbors on its row, and two \emph{inner} neighbors, joined to it by edges lying on two triangles. Since the kind of an edge does not depend on the endpoint, $\beta(\beta(v))=v$, and $v$ is an outer (inner) neighbor of each of its outer (inner) neighbors. Around $v$ the faces are two squares and three triangles, so the neighbors of $v$ appear in the cyclic order $\beta(v)$, outer, inner, inner, outer, and the three triangles containing $v$ are formed by $v$ with each consecutive pair among the last four. Hence the two inner neighbors of $v$ are adjacent, each outer neighbor of $v$ is adjacent to exactly one inner neighbor of $v$, and these are the only edges among the neighbors of $v$.

\begin{figure}[t]
\centering
\begin{tikzpicture}[scale=0.496]
\begin{scope}[shift={{(0,0)}}]
\draw (0,0)--(1,0);
\draw (1,0)--(2,0);
\draw (2,0)--(3,0);
\draw (3,0)--(4,0);
\draw (4,0)--(5,0);
\draw (5,0)--(6,0);
\draw (6,0)--(7,0);
\draw (7,0)--(8,0);
\draw (0,1)--(1,1);
\draw (1,1)--(2,1);
\draw (2,1)--(3,1);
\draw (3,1)--(4,1);
\draw (4,1)--(5,1);
\draw (5,1)--(6,1);
\draw (6,1)--(7,1);
\draw (7,1)--(8,1);
\draw (0,2)--(1,2);
\draw (1,2)--(2,2);
\draw (2,2)--(3,2);
\draw (3,2)--(4,2);
\draw (4,2)--(5,2);
\draw (5,2)--(6,2);
\draw (6,2)--(7,2);
\draw (7,2)--(8,2);
\draw (0,3)--(1,3);
\draw (1,3)--(2,3);
\draw (2,3)--(3,3);
\draw (3,3)--(4,3);
\draw (4,3)--(5,3);
\draw (5,3)--(6,3);
\draw (6,3)--(7,3);
\draw (7,3)--(8,3);
\draw (0,4)--(1,4);
\draw (1,4)--(2,4);
\draw (2,4)--(3,4);
\draw (3,4)--(4,4);
\draw (4,4)--(5,4);
\draw (5,4)--(6,4);
\draw (6,4)--(7,4);
\draw (7,4)--(8,4);
\draw (0,5)--(1,5);
\draw (1,5)--(2,5);
\draw (2,5)--(3,5);
\draw (3,5)--(4,5);
\draw (4,5)--(5,5);
\draw (5,5)--(6,5);
\draw (6,5)--(7,5);
\draw (7,5)--(8,5);
\draw (0,6)--(1,6);
\draw (1,6)--(2,6);
\draw (2,6)--(3,6);
\draw (3,6)--(4,6);
\draw (4,6)--(5,6);
\draw (5,6)--(6,6);
\draw (6,6)--(7,6);
\draw (7,6)--(8,6);
\draw (0,7)--(1,7);
\draw (1,7)--(2,7);
\draw (2,7)--(3,7);
\draw (3,7)--(4,7);
\draw (4,7)--(5,7);
\draw (5,7)--(6,7);
\draw (6,7)--(7,7);
\draw (7,7)--(8,7);
\draw (0,0)--(0,1);
\draw (0,1)--(0,2);
\draw (0,2)--(0,3);
\draw (0,3)--(0,4);
\draw (0,4)--(0,5);
\draw (0,5)--(0,6);
\draw (0,6)--(0,7);
\draw (1,0)--(1,1);
\draw (1,1)--(1,2);
\draw (1,2)--(1,3);
\draw (1,3)--(1,4);
\draw (1,4)--(1,5);
\draw (1,5)--(1,6);
\draw (1,6)--(1,7);
\draw (2,0)--(2,1);
\draw (2,1)--(2,2);
\draw (2,2)--(2,3);
\draw (2,3)--(2,4);
\draw (2,4)--(2,5);
\draw (2,5)--(2,6);
\draw (2,6)--(2,7);
\draw (3,0)--(3,1);
\draw (3,1)--(3,2);
\draw (3,2)--(3,3);
\draw (3,3)--(3,4);
\draw (3,4)--(3,5);
\draw (3,5)--(3,6);
\draw (3,6)--(3,7);
\draw (4,0)--(4,1);
\draw (4,1)--(4,2);
\draw (4,2)--(4,3);
\draw (4,3)--(4,4);
\draw (4,4)--(4,5);
\draw (4,5)--(4,6);
\draw (4,6)--(4,7);
\draw (5,0)--(5,1);
\draw (5,1)--(5,2);
\draw (5,2)--(5,3);
\draw (5,3)--(5,4);
\draw (5,4)--(5,5);
\draw (5,5)--(5,6);
\draw (5,6)--(5,7);
\draw (6,0)--(6,1);
\draw (6,1)--(6,2);
\draw (6,2)--(6,3);
\draw (6,3)--(6,4);
\draw (6,4)--(6,5);
\draw (6,5)--(6,6);
\draw (6,6)--(6,7);
\draw (7,0)--(7,1);
\draw (7,1)--(7,2);
\draw (7,2)--(7,3);
\draw (7,3)--(7,4);
\draw (7,4)--(7,5);
\draw (7,5)--(7,6);
\draw (7,6)--(7,7);
\draw (8,0)--(8,1);
\draw (8,1)--(8,2);
\draw (8,2)--(8,3);
\draw (8,3)--(8,4);
\draw (8,4)--(8,5);
\draw (8,5)--(8,6);
\draw (8,6)--(8,7);
\draw (0,1)--(1,2);
\draw (1,1)--(2,2);
\draw (2,1)--(3,2);
\draw (3,1)--(4,2);
\draw (4,1)--(5,2);
\draw (5,1)--(6,2);
\draw (6,1)--(7,2);
\draw (7,1)--(8,2);
\draw (0,3)--(1,4);
\draw (1,3)--(2,4);
\draw (2,3)--(3,4);
\draw (3,3)--(4,4);
\draw (4,3)--(5,4);
\draw (5,3)--(6,4);
\draw (6,3)--(7,4);
\draw (7,3)--(8,4);
\draw (0,5)--(1,6);
\draw (1,5)--(2,6);
\draw (2,5)--(3,6);
\draw (3,5)--(4,6);
\draw (4,5)--(5,6);
\draw (5,5)--(6,6);
\draw (6,5)--(7,6);
\draw (7,5)--(8,6);
\fill (0,0) circle (4.2pt);
\fill (1,0) circle (4.2pt);
\fill (2,0) circle (4.2pt);
\fill (3,0) circle (4.2pt);
\fill (4,0) circle (4.2pt);
\fill (5,0) circle (4.2pt);
\fill (6,0) circle (4.2pt);
\fill (7,0) circle (4.2pt);
\fill (8,0) circle (4.2pt);
\fill[white] (0,1) circle (3.0pt); \draw (0,1) circle (3.0pt);
\fill[white] (1,1) circle (3.0pt); \draw (1,1) circle (3.0pt);
\fill[white] (2,1) circle (3.0pt); \draw (2,1) circle (3.0pt);
\fill[white] (3,1) circle (3.0pt); \draw (3,1) circle (3.0pt);
\fill[white] (4,1) circle (3.0pt); \draw (4,1) circle (3.0pt);
\fill[white] (5,1) circle (3.0pt); \draw (5,1) circle (3.0pt);
\fill[white] (6,1) circle (3.0pt); \draw (6,1) circle (3.0pt);
\fill[white] (7,1) circle (3.0pt); \draw (7,1) circle (3.0pt);
\fill[white] (8,1) circle (3.0pt); \draw (8,1) circle (3.0pt);
\fill[white] (0,2) circle (3.0pt); \draw (0,2) circle (3.0pt);
\fill[white] (1,2) circle (3.0pt); \draw (1,2) circle (3.0pt);
\fill[white] (2,2) circle (3.0pt); \draw (2,2) circle (3.0pt);
\fill[white] (3,2) circle (3.0pt); \draw (3,2) circle (3.0pt);
\fill[white] (4,2) circle (3.0pt); \draw (4,2) circle (3.0pt);
\fill[white] (5,2) circle (3.0pt); \draw (5,2) circle (3.0pt);
\fill[white] (6,2) circle (3.0pt); \draw (6,2) circle (3.0pt);
\fill[white] (7,2) circle (3.0pt); \draw (7,2) circle (3.0pt);
\fill[white] (8,2) circle (3.0pt); \draw (8,2) circle (3.0pt);
\fill (0,3) circle (4.2pt);
\fill (1,3) circle (4.2pt);
\fill (2,3) circle (4.2pt);
\fill (3,3) circle (4.2pt);
\fill (4,3) circle (4.2pt);
\fill (5,3) circle (4.2pt);
\fill (6,3) circle (4.2pt);
\fill (7,3) circle (4.2pt);
\fill (8,3) circle (4.2pt);
\fill[white] (0,4) circle (3.0pt); \draw (0,4) circle (3.0pt);
\fill[white] (1,4) circle (3.0pt); \draw (1,4) circle (3.0pt);
\fill[white] (2,4) circle (3.0pt); \draw (2,4) circle (3.0pt);
\fill[white] (3,4) circle (3.0pt); \draw (3,4) circle (3.0pt);
\fill[white] (4,4) circle (3.0pt); \draw (4,4) circle (3.0pt);
\fill[white] (5,4) circle (3.0pt); \draw (5,4) circle (3.0pt);
\fill[white] (6,4) circle (3.0pt); \draw (6,4) circle (3.0pt);
\fill[white] (7,4) circle (3.0pt); \draw (7,4) circle (3.0pt);
\fill[white] (8,4) circle (3.0pt); \draw (8,4) circle (3.0pt);
\fill[white] (0,5) circle (3.0pt); \draw (0,5) circle (3.0pt);
\fill[white] (1,5) circle (3.0pt); \draw (1,5) circle (3.0pt);
\fill[white] (2,5) circle (3.0pt); \draw (2,5) circle (3.0pt);
\fill[white] (3,5) circle (3.0pt); \draw (3,5) circle (3.0pt);
\fill[white] (4,5) circle (3.0pt); \draw (4,5) circle (3.0pt);
\fill[white] (5,5) circle (3.0pt); \draw (5,5) circle (3.0pt);
\fill[white] (6,5) circle (3.0pt); \draw (6,5) circle (3.0pt);
\fill[white] (7,5) circle (3.0pt); \draw (7,5) circle (3.0pt);
\fill[white] (8,5) circle (3.0pt); \draw (8,5) circle (3.0pt);
\fill (0,6) circle (4.2pt);
\fill (1,6) circle (4.2pt);
\fill (2,6) circle (4.2pt);
\fill (3,6) circle (4.2pt);
\fill (4,6) circle (4.2pt);
\fill (5,6) circle (4.2pt);
\fill (6,6) circle (4.2pt);
\fill (7,6) circle (4.2pt);
\fill (8,6) circle (4.2pt);
\fill[white] (0,7) circle (3.0pt); \draw (0,7) circle (3.0pt);
\fill[white] (1,7) circle (3.0pt); \draw (1,7) circle (3.0pt);
\fill[white] (2,7) circle (3.0pt); \draw (2,7) circle (3.0pt);
\fill[white] (3,7) circle (3.0pt); \draw (3,7) circle (3.0pt);
\fill[white] (4,7) circle (3.0pt); \draw (4,7) circle (3.0pt);
\fill[white] (5,7) circle (3.0pt); \draw (5,7) circle (3.0pt);
\fill[white] (6,7) circle (3.0pt); \draw (6,7) circle (3.0pt);
\fill[white] (7,7) circle (3.0pt); \draw (7,7) circle (3.0pt);
\fill[white] (8,7) circle (3.0pt); \draw (8,7) circle (3.0pt);
\node at (4.00,-0.9) {(a)};
\end{scope}
\begin{scope}[shift={{(9.5,0)}}]
\draw (0,0)--(1,0);
\draw (1,0)--(2,0);
\draw (2,0)--(3,0);
\draw (3,0)--(4,0);
\draw (4,0)--(5,0);
\draw (5,0)--(6,0);
\draw (6,0)--(7,0);
\draw (7,0)--(8,0);
\draw (0,1)--(1,1);
\draw (1,1)--(2,1);
\draw (2,1)--(3,1);
\draw (3,1)--(4,1);
\draw (4,1)--(5,1);
\draw (5,1)--(6,1);
\draw (6,1)--(7,1);
\draw (7,1)--(8,1);
\draw (0,2)--(1,2);
\draw (1,2)--(2,2);
\draw (2,2)--(3,2);
\draw (3,2)--(4,2);
\draw (4,2)--(5,2);
\draw (5,2)--(6,2);
\draw (6,2)--(7,2);
\draw (7,2)--(8,2);
\draw (0,3)--(1,3);
\draw (1,3)--(2,3);
\draw (2,3)--(3,3);
\draw (3,3)--(4,3);
\draw (4,3)--(5,3);
\draw (5,3)--(6,3);
\draw (6,3)--(7,3);
\draw (7,3)--(8,3);
\draw (0,4)--(1,4);
\draw (1,4)--(2,4);
\draw (2,4)--(3,4);
\draw (3,4)--(4,4);
\draw (4,4)--(5,4);
\draw (5,4)--(6,4);
\draw (6,4)--(7,4);
\draw (7,4)--(8,4);
\draw (0,5)--(1,5);
\draw (1,5)--(2,5);
\draw (2,5)--(3,5);
\draw (3,5)--(4,5);
\draw (4,5)--(5,5);
\draw (5,5)--(6,5);
\draw (6,5)--(7,5);
\draw (7,5)--(8,5);
\draw (0,6)--(1,6);
\draw (1,6)--(2,6);
\draw (2,6)--(3,6);
\draw (3,6)--(4,6);
\draw (4,6)--(5,6);
\draw (5,6)--(6,6);
\draw (6,6)--(7,6);
\draw (7,6)--(8,6);
\draw (0,7)--(1,7);
\draw (1,7)--(2,7);
\draw (2,7)--(3,7);
\draw (3,7)--(4,7);
\draw (4,7)--(5,7);
\draw (5,7)--(6,7);
\draw (6,7)--(7,7);
\draw (7,7)--(8,7);
\draw (0,0)--(0,1);
\draw (0,1)--(0,2);
\draw (0,2)--(0,3);
\draw (0,3)--(0,4);
\draw (0,4)--(0,5);
\draw (0,5)--(0,6);
\draw (0,6)--(0,7);
\draw (1,0)--(1,1);
\draw (1,1)--(1,2);
\draw (1,2)--(1,3);
\draw (1,3)--(1,4);
\draw (1,4)--(1,5);
\draw (1,5)--(1,6);
\draw (1,6)--(1,7);
\draw (2,0)--(2,1);
\draw (2,1)--(2,2);
\draw (2,2)--(2,3);
\draw (2,3)--(2,4);
\draw (2,4)--(2,5);
\draw (2,5)--(2,6);
\draw (2,6)--(2,7);
\draw (3,0)--(3,1);
\draw (3,1)--(3,2);
\draw (3,2)--(3,3);
\draw (3,3)--(3,4);
\draw (3,4)--(3,5);
\draw (3,5)--(3,6);
\draw (3,6)--(3,7);
\draw (4,0)--(4,1);
\draw (4,1)--(4,2);
\draw (4,2)--(4,3);
\draw (4,3)--(4,4);
\draw (4,4)--(4,5);
\draw (4,5)--(4,6);
\draw (4,6)--(4,7);
\draw (5,0)--(5,1);
\draw (5,1)--(5,2);
\draw (5,2)--(5,3);
\draw (5,3)--(5,4);
\draw (5,4)--(5,5);
\draw (5,5)--(5,6);
\draw (5,6)--(5,7);
\draw (6,0)--(6,1);
\draw (6,1)--(6,2);
\draw (6,2)--(6,3);
\draw (6,3)--(6,4);
\draw (6,4)--(6,5);
\draw (6,5)--(6,6);
\draw (6,6)--(6,7);
\draw (7,0)--(7,1);
\draw (7,1)--(7,2);
\draw (7,2)--(7,3);
\draw (7,3)--(7,4);
\draw (7,4)--(7,5);
\draw (7,5)--(7,6);
\draw (7,6)--(7,7);
\draw (8,0)--(8,1);
\draw (8,1)--(8,2);
\draw (8,2)--(8,3);
\draw (8,3)--(8,4);
\draw (8,4)--(8,5);
\draw (8,5)--(8,6);
\draw (8,6)--(8,7);
\draw (0,1)--(1,2);
\draw (1,1)--(2,2);
\draw (2,1)--(3,2);
\draw (3,1)--(4,2);
\draw (4,1)--(5,2);
\draw (5,1)--(6,2);
\draw (6,1)--(7,2);
\draw (7,1)--(8,2);
\draw (0,3)--(1,4);
\draw (1,3)--(2,4);
\draw (2,3)--(3,4);
\draw (3,3)--(4,4);
\draw (4,3)--(5,4);
\draw (5,3)--(6,4);
\draw (6,3)--(7,4);
\draw (7,3)--(8,4);
\draw (0,5)--(1,6);
\draw (1,5)--(2,6);
\draw (2,5)--(3,6);
\draw (3,5)--(4,6);
\draw (4,5)--(5,6);
\draw (5,5)--(6,6);
\draw (6,5)--(7,6);
\draw (7,5)--(8,6);
\fill (0,0) circle (4.2pt);
\fill (1,0) circle (4.2pt);
\fill[white] (2,0) circle (3.0pt); \draw (2,0) circle (3.0pt);
\fill (3,0) circle (4.2pt);
\fill (4,0) circle (4.2pt);
\fill[white] (5,0) circle (3.0pt); \draw (5,0) circle (3.0pt);
\fill (6,0) circle (4.2pt);
\fill (7,0) circle (4.2pt);
\fill[white] (8,0) circle (3.0pt); \draw (8,0) circle (3.0pt);
\fill (0,1) circle (4.2pt);
\fill (1,1) circle (4.2pt);
\fill[white] (2,1) circle (3.0pt); \draw (2,1) circle (3.0pt);
\fill (3,1) circle (4.2pt);
\fill (4,1) circle (4.2pt);
\fill[white] (5,1) circle (3.0pt); \draw (5,1) circle (3.0pt);
\fill (6,1) circle (4.2pt);
\fill (7,1) circle (4.2pt);
\fill[white] (8,1) circle (3.0pt); \draw (8,1) circle (3.0pt);
\fill[white] (0,2) circle (3.0pt); \draw (0,2) circle (3.0pt);
\fill[white] (1,2) circle (3.0pt); \draw (1,2) circle (3.0pt);
\fill[white] (2,2) circle (3.0pt); \draw (2,2) circle (3.0pt);
\fill[white] (3,2) circle (3.0pt); \draw (3,2) circle (3.0pt);
\fill[white] (4,2) circle (3.0pt); \draw (4,2) circle (3.0pt);
\fill[white] (5,2) circle (3.0pt); \draw (5,2) circle (3.0pt);
\fill[white] (6,2) circle (3.0pt); \draw (6,2) circle (3.0pt);
\fill[white] (7,2) circle (3.0pt); \draw (7,2) circle (3.0pt);
\fill[white] (8,2) circle (3.0pt); \draw (8,2) circle (3.0pt);
\fill[white] (0,3) circle (3.0pt); \draw (0,3) circle (3.0pt);
\fill[white] (1,3) circle (3.0pt); \draw (1,3) circle (3.0pt);
\fill[white] (2,3) circle (3.0pt); \draw (2,3) circle (3.0pt);
\fill[white] (3,3) circle (3.0pt); \draw (3,3) circle (3.0pt);
\fill[white] (4,3) circle (3.0pt); \draw (4,3) circle (3.0pt);
\fill[white] (5,3) circle (3.0pt); \draw (5,3) circle (3.0pt);
\fill[white] (6,3) circle (3.0pt); \draw (6,3) circle (3.0pt);
\fill[white] (7,3) circle (3.0pt); \draw (7,3) circle (3.0pt);
\fill[white] (8,3) circle (3.0pt); \draw (8,3) circle (3.0pt);
\fill (0,4) circle (4.2pt);
\fill (1,4) circle (4.2pt);
\fill[white] (2,4) circle (3.0pt); \draw (2,4) circle (3.0pt);
\fill (3,4) circle (4.2pt);
\fill (4,4) circle (4.2pt);
\fill[white] (5,4) circle (3.0pt); \draw (5,4) circle (3.0pt);
\fill (6,4) circle (4.2pt);
\fill (7,4) circle (4.2pt);
\fill[white] (8,4) circle (3.0pt); \draw (8,4) circle (3.0pt);
\fill (0,5) circle (4.2pt);
\fill (1,5) circle (4.2pt);
\fill[white] (2,5) circle (3.0pt); \draw (2,5) circle (3.0pt);
\fill (3,5) circle (4.2pt);
\fill (4,5) circle (4.2pt);
\fill[white] (5,5) circle (3.0pt); \draw (5,5) circle (3.0pt);
\fill (6,5) circle (4.2pt);
\fill (7,5) circle (4.2pt);
\fill[white] (8,5) circle (3.0pt); \draw (8,5) circle (3.0pt);
\fill[white] (0,6) circle (3.0pt); \draw (0,6) circle (3.0pt);
\fill[white] (1,6) circle (3.0pt); \draw (1,6) circle (3.0pt);
\fill[white] (2,6) circle (3.0pt); \draw (2,6) circle (3.0pt);
\fill[white] (3,6) circle (3.0pt); \draw (3,6) circle (3.0pt);
\fill[white] (4,6) circle (3.0pt); \draw (4,6) circle (3.0pt);
\fill[white] (5,6) circle (3.0pt); \draw (5,6) circle (3.0pt);
\fill[white] (6,6) circle (3.0pt); \draw (6,6) circle (3.0pt);
\fill[white] (7,6) circle (3.0pt); \draw (7,6) circle (3.0pt);
\fill[white] (8,6) circle (3.0pt); \draw (8,6) circle (3.0pt);
\fill[white] (0,7) circle (3.0pt); \draw (0,7) circle (3.0pt);
\fill[white] (1,7) circle (3.0pt); \draw (1,7) circle (3.0pt);
\fill[white] (2,7) circle (3.0pt); \draw (2,7) circle (3.0pt);
\fill[white] (3,7) circle (3.0pt); \draw (3,7) circle (3.0pt);
\fill[white] (4,7) circle (3.0pt); \draw (4,7) circle (3.0pt);
\fill[white] (5,7) circle (3.0pt); \draw (5,7) circle (3.0pt);
\fill[white] (6,7) circle (3.0pt); \draw (6,7) circle (3.0pt);
\fill[white] (7,7) circle (3.0pt); \draw (7,7) circle (3.0pt);
\fill[white] (8,7) circle (3.0pt); \draw (8,7) circle (3.0pt);
\draw[red, very thick, dashed] (2.55,1.55) rectangle (5.45,5.45);
\node at (4.00,-0.9) {(b)};
\end{scope}
\end{tikzpicture}
\caption{Two periodic idcodes of density $1/3$ of the elongated triangular grid (black vertices), drawn as the square grid in which the squares of alternate rows are cut by a diagonal at $45^\circ$; these rows are the rows of triangles.}
\label{fig:elong}
\end{figure}

\begin{theorem}\label{thm:elong}
The minimum density of an idcode of the elongated triangular grid $3^3.4^2$ is $1/3$.
\end{theorem}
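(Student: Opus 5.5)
The proof has two halves: an explicit code of density $1/3$ for the upper bound, and a discharging argument for the matching lower bound.

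\textbf{Upper bound.} I would take the code of Figure~\ref{fig:elong}(a). It consists of every third horizontal line, chosen so that codeword lines alternate between the two kinds of horizontal lines. Its density is clearly $1/3$. To check that it is an idcode, I would use the local structure of the neighborhoods described before the statement, and the fact that the code is invariant under horizontal translation. Every vertex $v$ then has $I(v)$ equal to either a contiguous set of codewords on its own line, or the codeword neighbors of $v$ on an adjacent line. There are only three rows to examine in a vertical period, so this is a routine check, done as in the proof of Theorem~\ref{thm:tsq}. Code (b) gives a second, independent example.

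\textbf{Lower bound: setup.} I would use Theorem~\ref{thm:discharging}. Each codeword starts with charge $3$, every other vertex with charge $0$, and the goal is final charge at least $1$ everywhere. The first stage is the share argument: every vertex $v$ receives $3/|I(v)|$ from each codeword of $I(v)$. Every non-codeword then has charge exactly $3$ times the sum of the shares, and since $|I(v)|\le 6$ this may be as small as $1/2$. So the argument needs a correction: vertices with $|I(v)|\ge 4$ must be shown to be rare or compensated. It is cleaner to cap shares instead. A vertex $v$ receives $\min(3/|I(v)|, \cdot)$ with the rule fixed so that it ends with exactly $1$, and a codeword $c$ sends to each $v\in N[c]$ the amount $1/|I(v)|$. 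Every vertex then ends with exactly $1$, and the codeword $c$ ends with
\[
3-\sum_{v\in N[c]}1/|I(v)|.
\]
The whole difficulty becomes showing that this sum is at most $2$ for every codeword, possibly after a second-stage rule moving surplus from codewords with small sums to neighbors with large sums.

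\textbf{Lower bound: case analysis.} The sum over the six vertices of $N[c]$ exceeds $2$ only when several vertices of $N[c]$ have $|I|\le 2$. I would do the case analysis on $N[c]\cap C$ using the stated structure: the cyclic order $\beta, o_1, i_1, i_2, o_2$, the triangle edges $o_1i_1$, $i_1i_2$, $i_2o_2$, and no other edges among neighbors. The identifying constraints force extra codewords near $c$ in the bad configurations. If $I(c)=\{c\}$, every neighbor $x$ has $I(x)\ne\{c\}$, so each neighbor has another codeword in its neighborhood, and these codewords lie at distance $2$ from $c$. Several neighbors with $|I|=2$ would each need distinct second codewords, which must be spread out. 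I expect this case to yield a sum of at most $1+5/2$ exactly when nothing is shared, which is too large. The remedy is a second rule: a codeword $c'$ at distance $2$ that is counted in $I(x)$ for a neighbor $x$ of $c$ pays part of $c$'s excess. The bridge edge is what makes pairs $\{c,\beta(c)\}$ and triangles special, so I would organize the cases by whether $\beta(c)\in C$ and how many of $c$'s triangles contain a second codeword.

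The main obstacle is calibrating this second-stage rule so that donors never go below $1$. Codewords in dense clusters have tiny sums, since many neighbors have large $I$, and can afford to give. The delicate configurations are isolated codewords whose neighbors are all 2-identified, which is exactly the pattern in code (a). My first attempt would make the donation proportional, $1/|I(v)|-1/|I(v)|'$, and verify the worst cases on the finitely many local patterns within distance $2$. If hand analysis becomes unwieldy, I would enumerate those patterns.
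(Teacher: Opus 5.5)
Your upper bound is fine: in the code of Figure~\ref{fig:elong}(a), every set $I(v)$ is one of three kinds, namely three consecutive codewords of a line, a singleton $\{\beta(v)\}$, or two adjacent codewords of the neighboring line, and these sets are pairwise distinct. The lower bound, however, is a plan rather than a proof, and it starts from a wrong target. Under the share rule a codeword $c$ also receives a total of $1$, from the codewords of $I(c)$ with itself included. So $c$ ends with $4-\sum_{v\in N[c]}1/|I(v)|$, and what must be shown is $\sum_{v\in N[c]}1/|I(v)|\le 3$, not $\le 2$.

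The bound $2$ cannot be reached by any redistribution. In code (a) every codeword has sum exactly $3$: $1/3$ for each of the three codewords of $N[c]$ on its line, $1$ for $\beta(c)$, and $1/2$ for each inner neighbor. So there are no codewords with small sums to draw from; for a periodic code the average of these sums over $C$ is $1/\dens(C)$.

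With the correct target, the uniform share overshoots by at most $1/2$, but it does overshoot. Take $N[u]\cap C=\{u\}$, $I(u_1)=\{u,u_0\}$, $I(u_4)=\{u,u_5\}$, $I(u_2)=\{u,\beta(u_2)\}$, $I(u_3)=\{u,\beta(u_3)\}$ and $I(b)=\{u,x\}$ with $x$ an inner neighbor of $b$ (names of Figure~\ref{fig:elong-nbhd}); this violates no identifying condition near $u$. So a second-stage rule is indispensable. You never say which codewords pay, how much, or why they can afford it, and the step you call the main obstacle is the whole content of the lower bound.

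An excess-driven rule is also harder to verify than you suggest. A donor's outflow depends on the deficits of codewords at distance $2$, hence on $C\cap N_4$, not on patterns within distance $2$. A donor can also have a deficit of its own: in the configuration above, $u_0$ may be in exactly the same situation as $u$.

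The idea you are missing is to abandon the uniform split and give priority to the bridge neighbor: a non-codeword $v$ with $\beta(v)\in C$ receives its whole unit from $\beta(v)$. In the configuration above, $u$ then pays $1$ to $b$, nothing to $u_2,u_3$ (they are paid by $\beta(u_2),\beta(u_3)$), and $1/2$ to each of $u_1,u_4$, which is exactly $2$.

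The paper completes this with two more rules for non-codewords $v$ with $\beta(v)\notin C$. If $|I(v)|\ge 3$, then $v$ is paid only by its inner codeword neighbors that are adjacent to an outer codeword neighbor of $v$. If $|I(v)|\le 2$, the split is uniform. Codewords pay nothing to themselves, and a codeword $v$ with $\beta(v)\notin C$ gets back $1/2$ in two specified situations: from its unique codeword neighbor, or from its unique inner codeword neighbor when the outer neighbor adjacent to it is a codeword.

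Every amount then depends only on $N[v]\cap C$, so what a codeword $u$ sends is determined by $C\cap N_2[u]$. The lower bound reduces to showing that $u$ sends at most $2+r$, where $r$ is its refund. This is a short case analysis on whether $\beta(u)\in C$ and on $C\cap\{u_1,\dots,u_4\}$, using that at most one of $u_1,\dots,u_4$ receives $1$ from $u$. Without explicit rules of this kind and that verification, your argument does not establish $\dens(C)\ge 1/3$.
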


\begin{proof}
It is easy to check that the two sets of Figure~\ref{fig:elong} are periodic idcodes of density $1/3$.

Conversely, let $C$ be an idcode. Every codeword starts with charge $3$ and all other vertices with charge $0$. In the discharging phase every vertex $v\notin C$ receives exactly $1$ in total from the codewords of $I(v)$, and every codeword receives $1/2$ or nothing, according to the following rules.
\begin{itemize}
\item[(R1)] If $v\in C$ and $\beta(v)\notin C$, then $v$ sends $1$ to $\beta(v)$. Moreover, $v$ receives $1/2$ from a codeword $x$ in the following two situations: $x$ is the unique codeword neighbor of $v$, or $x$ is the unique inner neighbor of $v$ in $C$ and the outer neighbor of $v$ adjacent to $x$ is in $C$. In all other cases, a codeword receives nothing.
\item[(R2)] If $v\notin C$, $\beta(v)\notin C$ and $|I(v)|\ge 3$, then $v$ receives equal amounts (summing to $1$) from the inner neighbors of $v$ in $C$ that are adjacent to an outer neighbor of $v$ in $C$, and nothing from the other codewords. Such inner neighbors exist, since $I(v)$ contains three of the four outer and inner neighbors of $v$, and any three of them contain an outer neighbor together with its adjacent inner neighbor.
\item[(R3)] If $v\notin C$, $\beta(v)\notin C$ and $|I(v)|\le 2$, then $v$ receives $1/|I(v)|$ from each codeword of $I(v)$.
\end{itemize}
Every vertex outside $C$ ends with charge $1$, so it suffices to show that every codeword also ends with charge at least $1$. Let $u$ be a codeword, let $q$ be the total amount given by $u$ to its neighbors, and let $r\in\{0,\frac12\}$ be the amount that $u$ receives. Then $u$ ends with charge $3-q+r$, so we must show that $q\le 2+r$.

Let $b=\beta(u)$, and let $u_1,u_2,u_3,u_4$ be the other neighbors of $u$, in the cyclic order around $u$ starting after $b$. Thus the outer neighbors of $u$ are $u_1,u_4$ and its inner neighbors are $u_2,u_3$, and $u_1u_2u_3u_4$ is a path (see Figure~\ref{fig:elong-nbhd}). Let $P=\{u_1,u_2,u_3,u_4\}$. Also, the inner neighbors of $u_2$ are $u$ and $u_1$, and its outer neighbors are $u_3$ and a vertex $z_1$ adjacent to $u_1$; the vertex $z_1$ is the inner neighbor of $u_1$ other than $u_2$, and the outer neighbors of $u_1$ are $u$ and a vertex $u_0$. Symmetrically we define $z_4$ and $u_5$, so that $u_0,u_1,u,u_4,u_5$ are consecutive vertices of the horizontal line of $u$.

For $w\in N(u)$ we write $q_w$ for the amount that $u$ sends to $w$, so that $q=\sum_{w\in N(u)}q_w$. Every amount sent in the rules is $1/2$ or $1$, since in (R2) the charge is split among at most two inner neighbors and in (R3) we have $|I(v)|\le 2$. Hence $q_w\in\{0,1/2,1\}$ for every $w\in N(u)$. By (R1), we have $q_b=1$ and $r\le 1/2$ if $b\notin C$, and $q_b=r=0$ if $b\in C$.

\begin{figure}[ht]
\centering
\begin{tikzpicture}[scale=1.1]
\draw (-2.35,0)--(2.35,0);
\draw (-2.35,1)--(2.35,1);
\draw (-2.35,2)--(2.35,2);
\draw (-2,-0.35)--(-2,2.35);
\draw (-1,-0.35)--(-1,2.35);
\draw (0,-0.35)--(0,2.35);
\draw (1,-0.35)--(1,2.35);
\draw (2,-0.35)--(2,2.35);
\draw (-2,1)--(-1,2);
\draw (-1,1)--(0,2);
\draw (0,1)--(1,2);
\draw (1,1)--(2,2);
\draw (-2,2)--(-2.35,1.65);
\draw (2,1)--(2.35,1.35);
\fill[white] (-2,0) circle (2.2pt); \draw (-2,0) circle (2.2pt);
\fill[white] (-2,1) circle (2.2pt); \draw (-2,1) circle (2.2pt);
\node[below right,inner sep=2pt] at (-2,1) {$u_0$};
\fill[white] (-2,2) circle (2.2pt); \draw (-2,2) circle (2.2pt);
\fill[white] (-1,0) circle (2.2pt); \draw (-1,0) circle (2.2pt);
\fill[white] (-1,1) circle (2.2pt); \draw (-1,1) circle (2.2pt);
\node[below right,inner sep=2pt] at (-1,1) {$u_1$};
\fill[white] (-1,2) circle (2.2pt); \draw (-1,2) circle (2.2pt);
\node[above right,inner sep=2pt] at (-1,2) {$z_1$};
\fill[white] (0,0) circle (2.2pt); \draw (0,0) circle (2.2pt);
\node[below right,inner sep=2pt] at (0,0) {$b$};
\fill (0,1) circle (2.6pt);
\node[below right,inner sep=2pt] at (0,1) {$u$};
\fill[white] (0,2) circle (2.2pt); \draw (0,2) circle (2.2pt);
\node[above right,inner sep=2pt] at (0,2) {$u_2$};
\fill[white] (1,0) circle (2.2pt); \draw (1,0) circle (2.2pt);
\fill[white] (1,1) circle (2.2pt); \draw (1,1) circle (2.2pt);
\node[below right,inner sep=2pt] at (1,1) {$u_4$};
\fill[white] (1,2) circle (2.2pt); \draw (1,2) circle (2.2pt);
\node[above right,inner sep=2pt] at (1,2) {$u_3$};
\fill[white] (2,0) circle (2.2pt); \draw (2,0) circle (2.2pt);
\fill[white] (2,1) circle (2.2pt); \draw (2,1) circle (2.2pt);
\node[below right,inner sep=2pt] at (2,1) {$u_5$};
\fill[white] (2,2) circle (2.2pt); \draw (2,2) circle (2.2pt);
\node[above right,inner sep=2pt] at (2,2) {$z_4$};
\end{tikzpicture}
\caption{The names of the vertices around a codeword $u$ (black) in the proof of Theorem~\ref{thm:elong}, where $b=\beta(u)$. The outer neighbors are $u_1,u_4$ and the inner neighbors are $u_2,u_3$.}
\label{fig:elong-nbhd}
\end{figure}

By the rules, the values $q_{u_1}$ and $q_{u_2}$ are described as follows. Symmetrically, the values $q_{u_4}$ and $q_{u_3}$ are obtained by exchanging $u_i$ with $u_{5-i}$, and $z_1$ with $z_4$.
\begin{itemize}
\item[$\bullet$] $q_{u_1}=1$ if and only if $I(u_1)=\{u\}$.
\item[$\bullet$] $q_{u_2}=1$ if and only if $I(u_2)=\{u\}$, or $u_2,\beta(u_2)\notin C$, $u_3\in C$ and $I(u_2)$ is $\{u,u_3,u_1\}$ or $\{u,u_3,z_1\}$.
\item[$\bullet$] $q_{u_1}>0$ if and only if $\beta(u_1)\notin C$ and either $u_1\in C$ and $I(u_1)=\{u_1,u\}$, or $u_1\notin C$ and $|I(u_1)|\le 2$.
\item[$\bullet$] $q_{u_2}>0$ if and only if $\beta(u_2)\notin C$ and either $u_2\in C$ and ($I(u_2)=\{u_2,u\}$, or $u_1\notin C$ and $u_3\in C$), or $u_2\notin C$ and ($|I(u_2)|\le 2$ or $u_3\in C$).
\end{itemize}
Indeed, if $w\in P$ is a codeword, then by (R1) we have $q_w\le 1/2$, and $q_w>0$ exactly when $\beta(w)\notin C$ and either $u$ is the unique codeword neighbor of $w$, or $w=u_2$, $u_1\notin C$ and $u_3\in C$; this is because $u$ is an outer neighbor of $u_1$, while $u$ is an inner neighbor of $u_2$ and $u_3$ is the outer neighbor of $u_2$ adjacent to $u$. If $w\notin C$ and $\beta(w)\in C$, then $q_w=0$. If $w\notin C$, $\beta(w)\notin C$ and $|I(w)|\le 2$, then $q_w=1/|I(w)|$ by (R3). Finally, suppose that $w\notin C$, $\beta(w)\notin C$ and $|I(w)|\ge 3$, so that (R2) applies. If $w=u_1$, then $q_w=0$, since $u$ is an outer neighbor of $u_1$. If $w=u_2$, then $I(u_2)\subseteq\{u,u_1,u_3,z_1\}$, so $q_{u_2}=0$ if $u_3\notin C$, while if $u_3\in C$ the vertex $u_2$ receives from $u$, and also from $u_1$ exactly when $u_1,z_1\in C$; thus $q_{u_2}=1/2$ if $I(u_2)=\{u,u_1,u_3,z_1\}$, and $q_{u_2}=1$ otherwise.

We use two consequences of this description. First, $q_w=1$ for at most one vertex $w\in P$. Indeed, at most one vertex $v$ of $N[u]$ satisfies $I(v)=\{u\}$, since $C$ is identifying. So if $q_w=1$ for two vertices $w\in P$, one of them would be, by symmetry, $u_2$ with $u_2\notin C$, $u_3\in C$ and $I(u_2)\in\{\{u,u_3,u_1\},\{u,u_3,z_1\}\}$. The other one is neither the codeword $u_3$ nor $u_4$, whose set $I(u_4)$ contains $u_3$, so it is $u_1$ with $I(u_1)=\{u\}$. But then $u_1\notin C$ and $z_1\notin C$, as $z_1$ is adjacent to $u_1$, which contradicts the form of $I(u_2)$. Second, if $u_1,u_2\in C$, then $q_{u_1}=q_{u_2}=0$, since $u_2\in I(u_1)$ and $u_1\in I(u_2)$.

We now distinguish cases according to $b$ and to the set $C\cap P$, up to the symmetry exchanging $u_i$ with $u_{5-i}$. We must show that $\sum_{w\in P}q_w\le 2+r-q_b$.

Suppose first that $b\in C$, so that $q_b=r=0$, and we must show that $\sum_{w\in P}q_w\le 2$. Since $q_w=1$ for at most one $w\in P$, this holds unless $q_w>0$ for all four vertices $w\in P$ and $q_w=1$ for one of them, and we show that this is impossible. If $u_1,u_2\in C$ or $u_3,u_4\in C$, this follows from the second consequence above. If $C\cap P$ is $\{u_1,u_4\}$ or $\{u_2,u_3\}$, then no $q_w$ equals $1$: every $w\in P$ is a codeword or has a codeword neighbor in $P$, the exceptional case for $q_{u_2}=1$ requires $u_2\notin C$ and $u_3\in C$, and the one for $q_{u_3}=1$ requires $u_3\notin C$ and $u_2\in C$. Up to symmetry, the remaining sets are the following.
\begin{itemize}
\item[$\bullet$] $C\cap P=\emptyset$. Since $u_3\notin C$, $q_{u_2}>0$ gives $\beta(u_2)\notin C$ and $|I(u_2)|\le 2$, so $I(u_2)$ is $\{u\}$ or $\{u,z_1\}$. The latter would give $z_1\in I(u_1)$, and $q_{u_1}>0$ would give $I(u_1)=\{u,z_1\}=I(u_2)$. Hence $I(u_2)=\{u\}$, and symmetrically $I(u_3)=\{u\}$, which is impossible.
\item[$\bullet$] $C\cap P=\{u_1\}$. Then $q_{u_1}>0$ gives $I(u_1)=\{u_1,u\}$. Since $u_3\notin C$, $q_{u_2}>0$ gives $\beta(u_2)\notin C$ and $|I(u_2)|\le 2$, so $I(u_2)=\{u,u_1\}=I(u_1)$, which is impossible.
\item[$\bullet$] $C\cap P=\{u_2\}$. Then $q_{u_1}>0$ gives $|I(u_1)|\le 2$, so $I(u_1)=\{u,u_2\}$. Since $u_3\notin C$, $q_{u_2}>0$ gives $I(u_2)=\{u_2,u\}=I(u_1)$, which is impossible.
\item[$\bullet$] $C\cap P=\{u_1,u_3\}$. Since $u_2\notin C$, $q_{u_3}>0$ gives $I(u_3)=\{u_3,u\}$, and $q_{u_4}>0$ gives $|I(u_4)|\le 2$, so $I(u_4)=\{u,u_3\}=I(u_3)$, which is impossible.
\end{itemize}

Suppose now that $b\notin C$, so that $q_b=1$ and $I(u)=\{u\}\cup(C\cap P)$, and we must show that $\sum_{w\in P}q_w\le 1+r$. If $u_1,u_2\in C$, then $q_{u_1}=q_{u_2}=0$. If moreover $u_3\notin C$, then $r=1/2$ by (R1), and $q_{u_3}+q_{u_4}\le 3/2$, since at most one of them equals $1$. If $u_3\in C$, then $q_{u_3}\le 1/2$, and $q_{u_4}\le 1/2$ since $u_3\in I(u_4)$, so the sum is at most $1$. Up to symmetry, the remaining sets are the following.
\begin{itemize}
\item[$\bullet$] $C\cap P=\emptyset$. Then $r=0$ and $I(u)=\{u\}$, so no $w\in P$ satisfies $I(w)=\{u\}$, and $q_w\le 1/2$ for all $w\in P$. As in the case $b\in C$, if $q_{u_1}>0$ and $q_{u_2}>0$, then $I(u_2)=\{u\}$, which is impossible. Hence $q_w>0$ for at most one of $u_1,u_2$ and at most one of $u_3,u_4$, and the sum is at most $1$.
\item[$\bullet$] $C\cap P=\{u_1\}$. Then $r=1/2$ by (R1), and $I(u)=\{u,u_1\}$. We have $q_{u_1}=0$, since otherwise $I(u_1)=\{u_1,u\}=I(u)$, and $q_{u_2}=0$, since otherwise $\beta(u_2)\notin C$ and $|I(u_2)|\le 2$, which gives $I(u_2)=\{u,u_1\}=I(u)$. As at most one of $q_{u_3},q_{u_4}$ equals $1$, the sum is at most $3/2$.
\item[$\bullet$] $C\cap P=\{u_2\}$. Then $r=1/2$ by (R1), and $I(u)=\{u,u_2\}$. We have $q_{u_2}=0$, since otherwise $I(u_2)=\{u_2,u\}=I(u)$, and $q_{u_1}=0$, since otherwise $I(u_1)=\{u,u_2\}=I(u)$. As before, the sum is at most $3/2$.
\item[$\bullet$] $C\cap P=\{u_1,u_3\}$. Then $r=0$ and $I(u)=\{u,u_1,u_3\}$. Every $q_w$ is at most $1/2$: this is clear for the codewords $u_1,u_3$ and for $u_4$, whose set $I(u_4)$ contains $u_3$, and $q_{u_2}=1$ would give $I(u_2)=\{u,u_3,u_1\}=I(u)$. If $q_{u_1}>0$, then $I(u_1)=\{u_1,u\}$, so $z_1\notin C$; if moreover $q_{u_2}>0$, then $\beta(u_2)\notin C$ and $I(u_2)=\{u,u_1,u_3\}=I(u)$, which is impossible. If $q_{u_3}>0$ and $q_{u_4}>0$, then $I(u_3)=\{u_3,u\}=I(u_4)$, as in the case $b\in C$. Hence the sum is at most $1$.
\item[$\bullet$] $C\cap P=\{u_1,u_4\}$. Then $r=0$, and every $q_w$ is at most $1/2$. If $q_{u_1}>0$ and $q_{u_2}>0$, then $I(u_1)=\{u_1,u\}$, and, since $u_3\notin C$, $I(u_2)=\{u,u_1\}=I(u_1)$, which is impossible. By symmetry, the sum is at most $1$.
\item[$\bullet$] $C\cap P=\{u_2,u_3\}$. Then $r=0$ and $I(u)=\{u,u_2,u_3\}$, and every $q_w$ is at most $1/2$. If $q_{u_1}>0$ and $q_{u_2}>0$, then $I(u_1)=\{u,u_2\}$, so $z_1\notin C$, and $\beta(u_2)\notin C$, so $I(u_2)=\{u_2,u,u_3\}=I(u)$, which is impossible. By symmetry, the sum is at most $1$.
\end{itemize}
In all cases $q\le 2+r$.

Hence every vertex ends with charge at least $1$. Every codeword started with charge $3$ and charge only moves between adjacent vertices, so Theorem~\ref{thm:discharging} gives $\dens(C)\ge 1/3$.
\end{proof}

\section{The snub hexagonal grid}\label{sec:snub}

The snub hexagonal tiling is the only chiral Archimedean tiling, that is, the only one that cannot be mapped onto its mirror image by an isometry; all its vertices have configuration $3^4.6$ (see Figure~\ref{fig:snub}). Its graph is $5$-regular, so the general lower bound of~\cite{SSW24} gives $2/7$. Its minimum density was previously known to lie in the interval $(3/10,1/3]$~\cite{Alb11}. In this section we prove that it equals $1/3$, which is item~(e) of Theorem~\ref{thm:main}.

Every vertex of the snub hexagonal grid lies on exactly one hexagon, and its four other faces are triangles. Two hexagons are \emph{adjacent} if an edge joins them. Every hexagon is adjacent to six others, and with this adjacency the hexagons form a triangular grid (see Figure~\ref{fig:snub}).

For a vertex $v$, we list the five neighbors of $v$ counterclockwise, starting just after the hexagon of $v$, as $v_1,v_2,v_3,v_4,v_5$ (Figure~\ref{fig:snub-ball}), and we call $v_i$ the \emph{$i$-th neighbor} of $v$. The following facts can be checked in Figures~\ref{fig:snub-ball} and~\ref{fig:snub}. Since the symmetries of the tiling act transitively on the vertices and preserve the orientation, it suffices to check them for one vertex.
\begin{itemize}
\item[(F1)] The edges among the neighbors of $v$ are exactly $v_1v_2$, $v_2v_3$, $v_3v_4$ and $v_4v_5$.
\item[(F2)] The vertices $v_1,v_5$ lie on the hexagon of $v$, the vertices $v_3,v_4$ lie on a second hexagon, and $v_2$ lies on a third one; these three hexagons are pairwise adjacent.
\item[(F3)] The vertex $v$ is the fifth, third, second, fourth and first neighbor of $v_1,v_2,v_3,v_4,v_5$, respectively.
\end{itemize}

\begin{figure}[t]
\centering

\caption{The ball $N_2[u]$ of radius two around a vertex $u$ of the snub hexagonal grid, with the names used in the proof of Theorem~\ref{thm:snub}; edges leaving $N_2[u]$ are drawn in gray. The hexagon of $u$ contains $u_1,u_5,z_1,z_9$, the hexagon of $u_3,u_4$ contains $z_5,z_6$, and the hexagon of $u_2$ contains $z_3,z_4$.}
\label{fig:snub-ball}
\end{figure}

\begin{figure}[t]
\centering
%
\caption{The idcode of density $1/3$ of the snub hexagonal grid in the proof of Theorem~\ref{thm:snub} (black vertices), consisting of one color class of hexagons: any two closest hexagons of the idcode have two common adjacent hexagons.}
\label{fig:snub}
\end{figure}

\begin{theorem}\label{thm:snub}
The minimum density of an idcode of the snub hexagonal grid $3^4.6$ is $1/3$.
\end{theorem}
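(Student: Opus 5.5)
The upper bound comes from the code of Figure~\ref{fig:snub}. The hexagons form a triangular grid, which we $3$-colour properly. We take $C$ to be the set of all vertices lying on the hexagons of one colour class. Every vertex lies on exactly one hexagon, so $C$ has density $1/3$.

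To check that $C$ is an idcode, we use (F1)--(F3) to compute $I(v)$ according to where $v$'s three hexagons fall.
\begin{itemize}
\item[$\bullet$] A codeword $v$ has $I(v)\supseteq\{v,v_1,v_5\}$, together with whichever of $v_2$ and $\{v_3,v_4\}$ lie on code hexagons.
\item[$\bullet$] A non-codeword $v$ sees code vertices on the adjacent code hexagons. By (F2), the hexagon of $v$, the hexagon of $v_3,v_4$ and the hexagon of $v_2$ are pairwise adjacent, so they receive three distinct colours. Hence exactly one of them is a code hexagon.
\end{itemize}
Since $v$ itself is not a codeword, that code hexagon is the one containing $v_2$ or the one containing $v_3,v_4$. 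The resulting sets $I(v)$ are $\{v_2\}$ or $\{v_3,v_4\}$. By periodicity, distinctness reduces to comparing vertices at distance at most $2$ in one fundamental domain, which is a finite check, as in the proof of Theorem~\ref{thm:tsq}.

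For the lower bound we use Theorem~\ref{thm:discharging}. Each codeword starts with charge $3$ and every other vertex with charge $0$. The goal is rules under which every non-codeword ends with exactly $1$ and every codeword ends with at least $1$, so each codeword may give away at most $2$ net.

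We start from the share argument: $v\notin C$ receives $1/|I(v)|$ from each codeword of $I(v)$. A codeword $u$ overpays only when many of its neighbours $w$ have $|I(w)|\le 2$ or when $I(u)=\{u\}$. The share argument bounds the payment only by $1+5/2$, so we must recover $3/2$ per codeword. We therefore add corrective rules in the spirit of (R1)--(R3) of Theorem~\ref{thm:elong}. First, a codeword $u$ with a codeword neighbour gets back $1/2$ from a designated neighbour when it is poorly identified. Second, a non-codeword $v$ with $|I(v)|\ge3$ takes its unit charge only from codewords in a specific position relative to the triangle structure (F1), for example pairs $v_iv_{i+1}$ both in $C$. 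This relieves codewords adjacent to many vertices that are identified only once.

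The verification is a case analysis over $C\cap N_2[u]$ in the ball of Figure~\ref{fig:snub-ball}, up to the orientation-preserving symmetry. For each configuration we bound $\sum_w q_w-r\le 2$, using identification constraints of the form $I(x)\ne I(y)$ for $x,y\in N_2[u]$.

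The main obstacle is choosing rules that make every case close with no slack, because the bound is tight. The ball has about $19$ vertices, so the case split is too large by hand. I would first search for rules by computer: fix a family of candidate rules with rational parameters depending on the pattern in $N_1[v]$. Then, by linear programming over all locally valid patterns on $N_2[u]$ (those satisfying domination and separation inside the ball), choose parameters so that every codeword's charge is at least $1$.

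The output would then be a certificate table. Each row lists a pattern, the amounts sent, and the pair of vertices whose separation rules out the bad subcases. Each row can be checked by hand, and the whole table by a program.
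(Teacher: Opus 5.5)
The upper bound is fine: it is the paper's construction, and your computation of $I(v)$ is correct. For a codeword it even gives exactly $I(v)=\{v,v_1,v_5\}$, since the other two hexagons around $v$ carry the other two colours. Deferring distinctness to a finite check is no weaker than the paper's own ``easy to check''.

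The gap is the lower bound, which is the substance of the theorem. Your framework is the right one: charge $3$ on codewords, each non-codeword receives exactly $1$, refunds of $1/2$, and one must show $q\le 2+r$ by a case analysis on $C\cap N_2[u]$. But you never state a rule or check a configuration; the proposal stops at a procedure for searching for rules by linear programming. The existence of rules that close every configuration is exactly what has to be proved, and it is not automatic. The paper notes that no rule splitting charge by fixed priorities among the positions $v_1,\dots,v_5$ reaches $1/3$. The paper supplies the missing content explicitly:
\begin{itemize}
\item[$\bullet$] the rules of Table~\ref{tab:snub-rules}: seventeen exceptional sets $I'(v)$ for non-codewords and seven refund situations for codewords;
\item[$\bullet$] the certificate of Table~\ref{tab:snub-cert}, which checks $q\le 2+r$ case by case. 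It uses only domination and separations $I(x)\neq I(y)$ with $N[x]\triangle N[y]\subseteq N_2[u]$. The ball has $15$ vertices, not about $19$, and since the vertex stabilizer is trivial there is no symmetry reduction.
\end{itemize}

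The concrete shape you sketch is also narrower than the rules known to work. You keep the share rule whenever $|I(v)|\le 2$ and redirect charge only when $|I(v)|\ge 3$, for instance toward adjacent pairs $v_iv_{i+1}$. The paper's rules differ on both counts:
\begin{itemize}
\item[$\bullet$] For three $2$-element sets they send the whole unit to a single codeword: $\{v_2,v_5\}\to v_5$, $\{v_2,v_3\}\to v_2$, $\{v_1,v_3\}\to v_1$. This sometimes makes $u$ pay more than its share in order to relieve another codeword.
\item[$\bullet$] Several designated pairs are non-adjacent: $\{v_1,v_5\}$, $\{v_1,v_3\}$, $\{v_2,v_5\}$.
\end{itemize}
Nothing in your proposal shows that your restricted family suffices. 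As written, it is a research plan (essentially the one the paper carried out), not a proof that every idcode has density at least $1/3$.
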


\begin{proof}
The upper bound is given by a code made of whole hexagons. The triangular grid formed by the hexagons has a unique proper $3$-coloring, up to permutation of the colors, since the colors of one triangle determine all the others. This coloring is periodic, and each color class contains one third of the hexagons. Let $C$ be the union of the hexagons of one color class, as shown in Figure~\ref{fig:snub}. Since the hexagons partition the vertex set and have six vertices each, the density of $C$ is $1/3$, and it is easy to check with Figure~\ref{fig:snub} that $C$ is an idcode.

Conversely, let $C$ be an idcode. We use a scheme similar to the one in Section~\ref{sec:elong}: every codeword starts with charge $3$ and all other vertices with charge $0$, and in the discharging phase every vertex $v\notin C$ receives exactly $1$ in total from the codewords of $I(v)$, and every codeword receives $1/2$ or nothing. For a vertex $v$, let $I'(v)=I(v)\setminus\{v\}=N(v)\cap C$ be the set of codeword neighbors of $v$. The amounts received by a vertex $v$ depend only on $I(v)$ and are given by the following rules, in which the codewords of $N(v)$ are designated by their names $v_1,\dots,v_5$ relative to $v$.
\begin{itemize}
\item[(R1)] If $v\notin C$, then $v$ receives $1/|I(v)|$ from each codeword of $I(v)$, except when $I'(v)$ is one of the seventeen sets of Table~\ref{tab:snub-rules}(a); in that case $v$ receives equal amounts (summing to $1$) from the codewords listed in the second column, and nothing from the other codewords of $I(v)$.
\item[(R2)] If $v\in C$ and $I'(v)$ is one of the seven sets of Table~\ref{tab:snub-rules}(b), then $v$ receives $1/2$ from the codeword listed in the second column. Otherwise a codeword receives nothing.
\end{itemize}

\begin{table}[t]
\centering
\small
\begin{tabular}[t]{ll}
\multicolumn{2}{c}{(a) $v\notin C$}\\
\hline
\rowcolor{gray!25}$I'(v)$ & receives from\\
\hline
$\{v_2,v_5\}$ & $v_5$ \\
$\{v_2,v_3\}$ & $v_2$ \\
$\{v_1,v_3\}$ & $v_1$ \\
$\{v_1,v_2,v_5\}$ & $v_1,v_5$ \\
$\{v_1,v_2,v_3\}$ & $v_1,v_2$ \\
$\{v_1,v_2,v_4\}$ & $v_1,v_2$ \\
$\{v_1,v_3,v_4\}$ & $v_1,v_3$ \\
$\{v_1,v_3,v_5\}$ & $v_1,v_5$ \\
$\{v_2,v_3,v_4\}$ & $v_2,v_3$ \\
$\{v_2,v_3,v_5\}$ & $v_2,v_5$ \\
$\{v_2,v_4,v_5\}$ & $v_4,v_5$ \\
$\{v_3,v_4,v_5\}$ & $v_3,v_4$ \\
$\{v_1,v_2,v_3,v_4\}$ & $v_2,v_3$ \\
$\{v_1,v_2,v_3,v_5\}$ & $v_2,v_5$ \\
$\{v_1,v_3,v_4,v_5\}$ & $v_3,v_4$ \\
$\{v_2,v_3,v_4,v_5\}$ & $v_3,v_4$ \\
$\{v_1,v_2,v_3,v_4,v_5\}$ & $v_2,v_3,v_4$ \\
\hline
\end{tabular}
\hspace{4em}
\begin{tabular}[t]{lc}
\multicolumn{2}{c}{(b) $v\in C$}\\
\hline
\rowcolor{gray!25}$I'(v)$ & receives $1/2$ from\\
\hline
$\{v_1\}$ & $v_1$ \\
$\{v_3\}$ & $v_3$ \\
$\{v_4\}$ & $v_4$ \\
$\{v_1,v_2\}$ & $v_2$ \\
$\{v_2,v_4\}$ & $v_4$ \\
$\{v_4,v_5\}$ & $v_5$ \\
$\{v_1,v_2,v_5\}$ & $v_2$ \\
\hline
\end{tabular}
\caption{The exceptional cases of rules (R1) and (R2), where $v_1,\ldots,v_5$ are the neighbors of $v$ and $I'(v)=I(v)\setminus\{v\}$.}
\label{tab:snub-rules}
\end{table}

Every vertex outside $C$ ends with charge $1$, so it suffices to show that every codeword also ends with charge at least $1$. Let $u$ be a codeword, let $q$ be the total amount given by $u$ to its neighbors, and let $r\in\{0,\frac12\}$ be the amount that $u$ receives. Then $u$ ends with charge $3-q+r$, so we must show that $q\le 2+r$.

For $w\in N(u)$, let $q_w$ be the amount that $u$ sends to $w$, so that $q=\sum_{w\in N(u)}q_w$. We name the vertices of $N_2[u]$ as in Figure~\ref{fig:snub-ball}: $u_1,\dots,u_5$ are the neighbors of $u$, and $z_1,\dots,z_9$ are the vertices at distance two. By (F3), $u$ is the fifth, third, second, fourth and first neighbor of $u_1,u_2,u_3,u_4,u_5$, respectively, and this is how $u$ appears in the rules applied by these vertices.

We first record what the rules give. Clearly $r\in\{0,\frac12\}$. Let $w\in N(u)$. If $w\in C$, then $q_w$ is $\frac12$ in the seven situations of Table~\ref{tab:snub-rules}(b) in which the listed codeword is $u$, and $0$ otherwise. If $w\notin C$, then $q_w=1$ when $I(w)=\{u\}$, and also in the three exceptional cases with $|I(w)|=2$ of Table~\ref{tab:snub-rules}(a) in which $u$ is the listed codeword; in all other cases $q_w\le\frac12$. Thus $q_w\le 1$ always, and $q_w\in\{0,\frac12,1\}$ unless $|I(w)|\ge 3$.

The value $q_w$ depends only on $N[w]\cap C$, and $N[w]\subseteq N_2[u]$. Table~\ref{tab:snub-cert} is a case analysis. First, the cases are separated according to the set $I'(u)$ of codeword neighbors of $u$. Then, for some sets $I'(u)$, each case is subdivided according to whether some further vertices of $N_2[u]$ are in $C$. In each case, these further assumptions, together with $I'(u)$ and $u\in C$, determine whether some vertices of $N_2[u]$ are in $C$. Each row of the table gives:
\begin{itemize}
\item[$\bullet$] a lower bound for $r$;
\item[$\bullet$] for each $w\in N(u)$, the maximum possible value of $q_w$, over all ways of deciding which vertices of $N[w]$ not determined by the row are in $C$, subject only to the following two conditions, which every idcode satisfies: $I(x)\neq\emptyset$ for every $x$ with $N[x]\subseteq N_2[u]$, and $I(x)\neq I(y)$, that is, $N[x]\triangle N[y]$ contains a codeword, for all $x\in N_2[u]$ and $y\neq x$ with $N[x]\triangle N[y]\subseteq N_2[u]$.
\end{itemize}
These maxima are obtained by reading Table~\ref{tab:snub-rules} together with the observations of the previous paragraph. For instance, in the first row with $I'(u)=\emptyset$ that is not impossible, the vertex $u_2$ receives nothing from $u$: its codeword neighbors are $u$ and $z_4$, which are its third and first neighbors, so $I(u_2)=\{v_1,v_3\}$ in the names relative to $v=u_2$, and by Table~\ref{tab:snub-rules}(a) the vertex $u_2$ receives everything from $z_4$. The rows marked impossible are those in which the fixed vertices already violate one of the two conditions: for instance, if $I'(u)=\emptyset$ and $z_3,z_4\notin C$, then $I(u_2)=\{u\}=I(u)$. In all the other rows the upper bounds add up to at most $2$ plus the lower bound for $r$, and the rows cover all possibilities for $I'(u)$ and the further vertices. Hence $q\le 2+r$ in all cases, and every vertex ends with charge at least $1$. Every codeword started with charge $3$ and charge only moves between adjacent vertices, so Theorem~\ref{thm:discharging} gives $\dens(C)\ge 1/3$.
\end{proof}

\afterpage{{\small
\setlength{\tabcolsep}{4pt}
\begin{longtable}{l|l|l|>{\columncolor{gray!10}}c>{\columncolor{gray!10}}c>{\columncolor{gray!10}}c>{\columncolor{gray!10}}c>{\columncolor{gray!10}}c>{\columncolor{gray!10}}c>{\columncolor{gray!10}}c}
\caption{Certificate for the proof of Theorem~\ref{thm:snub}. The first column is $I'(u)=I(u)\setminus\{u\}$, the next two columns list further assumptions on $N_2[u]$, namely the vertices assumed to be in $C$ and those assumed not to be in $C$, the next five columns give upper bounds for $q_{u_1},\dots,q_{u_5}$, the next one their sum, and the last one a lower bound for $r$. In every possible row, the sum is at most $2+r$.}\label{tab:snub-cert}\\
\hline
\rowcolor{gray!25} & \multicolumn{2}{c|}{further assumptions} & & & & & & & \\
\rowcolor{gray!25}$I'(u)$ & \multicolumn{1}{c|}{$\in C$} & \multicolumn{1}{c|}{$\notin C$} & $u_1$ & $u_2$ & $u_3$ & $u_4$ & $u_5$ & sum & $r$\\
\hline
\endfirsthead
\hline
\rowcolor{gray!25} & \multicolumn{2}{c|}{further assumptions} & & & & & & & \\
\rowcolor{gray!25}$I'(u)$ & \multicolumn{1}{c|}{$\in C$} & \multicolumn{1}{c|}{$\notin C$} & $u_1$ & $u_2$ & $u_3$ & $u_4$ & $u_5$ & sum & $r$\\
\hline
\endhead
\hline
\endfoot
$\emptyset$ &  & $z_3,z_4$ & \multicolumn{7}{>{\columncolor{gray!10}}l}{impossible: $I(u)=I(u_2)$} \\
 & $z_4$ & $z_3,z_5$ & \multicolumn{7}{>{\columncolor{gray!10}}l}{impossible: $I(u_2)=I(u_3)$} \\
 & $z_4,z_5$ & $z_3,z_8,z_9$ & $1$ & $0$ & $0$ & $0$ & $1$ & $2$ & $0$ \\
 & $z_4,z_5,z_8$ & $z_3,z_9$ & $1$ & $0$ & $0$ & $\frac{1}{2}$ & $\frac{1}{2}$ & $2$ & $0$ \\
 & $z_4,z_5,z_9$ & $z_3$ & $1$ & $0$ & $0$ & $\frac{1}{2}$ & $\frac{1}{2}$ & $2$ & $0$ \\
 & $z_3$ & $z_4,z_5$ & \multicolumn{7}{>{\columncolor{gray!10}}l}{impossible: $I(u)=I(u_3)$} \\
 & $z_3,z_5$ & $z_4,z_8,z_9$ & $\frac{1}{2}$ & $\frac{1}{2}$ & $0$ & $0$ & $1$ & $2$ & $0$ \\
 & $z_3,z_5,z_8$ & $z_4,z_9$ & $\frac{1}{2}$ & $\frac{1}{2}$ & $0$ & $\frac{1}{2}$ & $\frac{1}{2}$ & $2$ & $0$ \\
 & $z_3,z_5,z_9$ & $z_4$ & $\frac{1}{2}$ & $\frac{1}{2}$ & $0$ & $\frac{1}{2}$ & $\frac{1}{2}$ & $2$ & $0$ \\
 & $z_3,z_4$ & $z_8,z_9$ & $\frac{1}{2}$ & $0$ & $\frac{1}{2}$ & $0$ & $1$ & $2$ & $0$ \\
 & $z_3,z_4,z_8$ & $z_9$ & $\frac{1}{2}$ & $0$ & $\frac{1}{2}$ & $\frac{1}{2}$ & $\frac{1}{2}$ & $2$ & $0$ \\
 & $z_3,z_4,z_9$ &  & $\frac{1}{2}$ & $0$ & $\frac{1}{2}$ & $\frac{1}{2}$ & $\frac{1}{2}$ & $2$ & $0$ \\
\hline
$u_5$ &  & $z_3$ & $1$ & $1$ & $0$ & $0$ & $0$ & $2$ & $0$ \\
 & $z_3$ &  & $\frac{1}{2}$ & $\frac{1}{2}$ & $1$ & $0$ & $0$ & $2$ & $0$ \\
\hline
$u_4$ &  & $z_4,z_6$ & $1$ & $1$ & $0$ & $\frac{1}{2}$ & $0$ & $\frac{5}{2}$ & $\frac{1}{2}$ \\
 & $z_6$ & $z_4$ & $1$ & $1$ & $0$ & $0$ & $\frac{1}{2}$ & $\frac{5}{2}$ & $\frac{1}{2}$ \\
 & $z_4$ &  & $1$ & $0$ & $\frac{1}{2}$ & $\frac{1}{2}$ & $\frac{1}{2}$ & $\frac{5}{2}$ & $\frac{1}{2}$ \\
\hline
$u_4,u_5$ & --- & --- & $1$ & $1$ & $\frac{1}{2}$ & $0$ & $0$ & $\frac{5}{2}$ & $\frac{1}{2}$ \\
\hline
$u_3$ & --- & --- & $1$ & $0$ & $0$ & $\frac{1}{2}$ & $1$ & $\frac{5}{2}$ & $\frac{1}{2}$ \\
\hline
$u_3,u_5$ & --- & --- & $1$ & $0$ & $0$ & $\frac{1}{2}$ & $\frac{1}{2}$ & $2$ & $0$ \\
\hline
$u_3,u_4$ & --- & --- & $1$ & $0$ & $\frac{1}{2}$ & $0$ & $\frac{1}{2}$ & $2$ & $0$ \\
\hline
$u_3,u_4,u_5$ & --- & --- & $1$ & $0$ & $\frac{1}{2}$ & $0$ & $0$ & $\frac{3}{2}$ & $0$ \\
\hline
$u_2$ &  & $z_3,z_6$ & $\frac{1}{2}$ & $0$ & $0$ & $1$ & $\frac{1}{2}$ & $2$ & $0$ \\
 & $z_3$ & $z_6$ & $0$ & $0$ & $\frac{1}{2}$ & $1$ & $\frac{1}{2}$ & $2$ & $0$ \\
 & $z_6$ & $z_3$ & $\frac{1}{2}$ & $0$ & $0$ & $\frac{1}{2}$ & $1$ & $2$ & $0$ \\
 & $z_3,z_6$ &  & $0$ & $0$ & $\frac{1}{2}$ & $\frac{1}{2}$ & $1$ & $2$ & $0$ \\
\hline
$u_2,u_5$ &  & $z_3,z_6$ & $\frac{1}{2}$ & $\frac{1}{2}$ & $\frac{1}{2}$ & $\frac{1}{2}$ & $0$ & $2$ & $0$ \\
 & $z_6$ & $z_3$ & $\frac{1}{2}$ & $\frac{1}{2}$ & $\frac{1}{2}$ & $0$ & $\frac{1}{2}$ & $2$ & $0$ \\
 & $z_3$ &  & $0$ & $0$ & $1$ & $\frac{1}{2}$ & $\frac{1}{2}$ & $2$ & $0$ \\
\hline
$u_2,u_4$ & --- & --- & $\frac{1}{2}$ & $\frac{1}{2}$ & $\frac{1}{2}$ & $\frac{1}{2}$ & $\frac{1}{2}$ & $\frac{5}{2}$ & $\frac{1}{2}$ \\
\hline
$u_2,u_4,u_5$ & --- & --- & $\frac{1}{2}$ & $\frac{1}{2}$ & $\frac{1}{2}$ & $0$ & $0$ & $\frac{3}{2}$ & $0$ \\
\hline
$u_2,u_3$ & --- & --- & $\frac{1}{2}$ & $0$ & $0$ & $\frac{1}{2}$ & $1$ & $2$ & $0$ \\
\hline
$u_2,u_3,u_5$ & --- & --- & $\frac{1}{2}$ & $0$ & $0$ & $\frac{1}{2}$ & $\frac{1}{2}$ & $\frac{3}{2}$ & $0$ \\
\hline
$u_2,u_3,u_4$ & --- & --- & $\frac{1}{2}$ & $0$ & $0$ & $0$ & $\frac{1}{2}$ & $1$ & $0$ \\
\hline
$u_2,u_3,u_4,u_5$ & --- & --- & $\frac{1}{2}$ & $0$ & $0$ & $0$ & $0$ & $\frac{1}{2}$ & $0$ \\
\hline
$u_1$ &  & $z_5,z_6,z_7$ & $0$ & $\frac{1}{2}$ & $\frac{1}{2}$ & $1$ & $\frac{1}{2}$ & $\frac{5}{2}$ & $\frac{1}{2}$ \\
 & $z_7$ & $z_5,z_6$ & $0$ & $\frac{1}{2}$ & $1$ & $\frac{1}{2}$ & $\frac{1}{2}$ & $\frac{5}{2}$ & $\frac{1}{2}$ \\
 & $z_6$ & $z_5,z_7,z_8,z_9$ & $0$ & $\frac{1}{2}$ & $\frac{1}{2}$ & $\frac{1}{2}$ & $1$ & $\frac{5}{2}$ & $\frac{1}{2}$ \\
 & $z_6,z_7$ & $z_5,z_8,z_9$ & $0$ & $\frac{1}{2}$ & $1$ & $0$ & $1$ & $\frac{5}{2}$ & $\frac{1}{2}$ \\
 & $z_6,z_8$ & $z_5,z_9$ & $0$ & $\frac{1}{2}$ & $1$ & $\frac{1}{2}$ & $\frac{1}{2}$ & $\frac{5}{2}$ & $\frac{1}{2}$ \\
 & $z_6,z_9$ & $z_5$ & $0$ & $\frac{1}{2}$ & $1$ & $\frac{1}{2}$ & $\frac{1}{2}$ & $\frac{5}{2}$ & $\frac{1}{2}$ \\
 & $z_5$ &  & $0$ & $\frac{1}{2}$ & $0$ & $1$ & $1$ & $\frac{5}{2}$ & $\frac{1}{2}$ \\
\hline
$u_1,u_5$ &  & $z_6$ & $0$ & $\frac{1}{2}$ & $1$ & $\frac{1}{2}$ & $0$ & $2$ & $0$ \\
 & $z_6$ &  & $0$ & $\frac{1}{2}$ & $1$ & $0$ & $\frac{1}{2}$ & $2$ & $0$ \\
\hline
$u_1,u_4$ & --- & --- & $0$ & $\frac{1}{2}$ & $\frac{1}{2}$ & $\frac{1}{2}$ & $\frac{1}{2}$ & $2$ & $0$ \\
\hline
$u_1,u_4,u_5$ & --- & --- & $0$ & $\frac{1}{2}$ & $\frac{1}{2}$ & $0$ & $0$ & $1$ & $0$ \\
\hline
$u_1,u_3$ & --- & --- & $0$ & $\frac{1}{2}$ & $0$ & $\frac{1}{2}$ & $1$ & $2$ & $0$ \\
\hline
$u_1,u_3,u_5$ & --- & --- & $0$ & $\frac{1}{2}$ & $0$ & $\frac{1}{2}$ & $\frac{1}{2}$ & $\frac{3}{2}$ & $0$ \\
\hline
$u_1,u_3,u_4$ & --- & --- & $0$ & $\frac{1}{2}$ & $\frac{1}{2}$ & $0$ & $\frac{1}{2}$ & $\frac{3}{2}$ & $0$ \\
\hline
$u_1,u_3,u_4,u_5$ & --- & --- & $0$ & $\frac{1}{2}$ & $\frac{1}{2}$ & $0$ & $0$ & $1$ & $0$ \\
\hline
$u_1,u_2$ &  & $z_6$ & $0$ & $0$ & $1$ & $1$ & $\frac{1}{2}$ & $\frac{5}{2}$ & $\frac{1}{2}$ \\
 & $z_6$ &  & $0$ & $0$ & $1$ & $\frac{1}{2}$ & $1$ & $\frac{5}{2}$ & $\frac{1}{2}$ \\
\hline
$u_1,u_2,u_5$ & --- & --- & $\frac{1}{2}$ & $0$ & $1$ & $\frac{1}{2}$ & $\frac{1}{2}$ & $\frac{5}{2}$ & $\frac{1}{2}$ \\
\hline
$u_1,u_2,u_4$ & --- & --- & $\frac{1}{2}$ & $0$ & $\frac{1}{2}$ & $\frac{1}{2}$ & $\frac{1}{2}$ & $2$ & $0$ \\
\hline
$u_1,u_2,u_4,u_5$ & --- & --- & $\frac{1}{2}$ & $0$ & $\frac{1}{2}$ & $0$ & $0$ & $1$ & $0$ \\
\hline
$u_1,u_2,u_3$ & --- & --- & $\frac{1}{2}$ & $0$ & $0$ & $\frac{1}{2}$ & $1$ & $2$ & $0$ \\
\hline
$u_1,u_2,u_3,u_5$ & --- & --- & $\frac{1}{2}$ & $0$ & $0$ & $\frac{1}{2}$ & $\frac{1}{2}$ & $\frac{3}{2}$ & $0$ \\
\hline
$u_1,u_2,u_3,u_4$ & --- & --- & $\frac{1}{2}$ & $0$ & $0$ & $0$ & $\frac{1}{2}$ & $1$ & $0$ \\
\hline
$u_1,u_2,u_3,u_4,u_5$ & --- & --- & $\frac{1}{2}$ & $0$ & $0$ & $0$ & $0$ & $\frac{1}{2}$ & $0$ \\
\end{longtable}
}}

In the code of Figure~\ref{fig:snub}, every codeword has exactly its two hexagon neighbors as codeword neighbors, and the sets $I(v)$ have sizes $1$, $2$ and $3$ in equal proportions; every codeword ends with charge exactly $1$. The rules (R1) and (R2) and the case analysis of Table~\ref{tab:snub-cert} were found and checked with the help of a computer~\cite{rudini26idc}, but the verification of each row only requires reading Table~\ref{tab:snub-rules}. Unlike the other grids treated here, the snub hexagonal grid does not seem to admit a short discharging argument: no rule in which every vertex simply splits its charge according to fixed priorities among the positions $v_1,\dots,v_5$ achieves the bound $1/3$.

\section{The remaining grids}\label{sec:remaining}

Together with the triangular and square grids, Theorem~\ref{thm:main} settles seven of the eleven Archimedean grids. Besides the hexagonal grid, the problem remains open for the rhombitrihexagonal grid $3.4.6.4$, the truncated trihexagonal grid $4.6.12$ and the snub square grid $3^2.4.3.4$. For these three grids we searched for periodic idcodes of minimum density with a constraint solver, over the period lattices with fundamental domains of up to about $100$ to $150$ vertices, with a time limit for each lattice. The best codes we found give the upper bounds of the following result, which is items~(f)--(h) of Theorem~\ref{thm:main}.

\begin{theorem}\label{thm:remaining}
The minimum density of an idcode lies
\begin{itemize}
\item[$\bullet$] between $3/7$ and $13/30$ for the truncated trihexagonal grid $4.6.12$;
\item[$\bullet$] between $1/3$ and $11/30$ for the rhombitrihexagonal grid $3.4.6.4$;
\item[$\bullet$] between $2/7$ and $9/28$ for the snub square grid $3^2.4.3.4$.
\end{itemize}
\end{theorem}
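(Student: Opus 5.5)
The lower bounds are all consequences of results stated earlier. For the truncated trihexagonal grid $4.6.12=\operatorname{tr}(3.6.3.6)$, I first verify its structure. It is cubic, it has the slow growth property, and every vertex lies on exactly one $4$-cycle: the squares come from the degree-$4$ vertices of the kagome grid, and the hexagons and $12$-gons contain no other $4$-cycles. Theorem~\ref{thm:4cycle} then gives the lower bound $3/7$. For the rhombitrihexagonal grid $3.4.6.4$, which is $4$-regular, and the snub square grid $3^2.4.3.4$, which is $5$-regular, both grids are twin-free, because two adjacent vertices have at most two common neighbors. The general bound $2/(\Delta+2)$ of~\cite[Theorem~4.1]{SSW24} then gives $2/6=1/3$ and $2/7$ respectively.

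The upper bounds come from explicit periodic codes found by the computer search described above:
\begin{itemize}
\item a code with $13$ codewords in a fundamental domain of $30$ vertices for $4.6.12$;
\item a code with $11$ codewords in a fundamental domain of $30$ vertices for $3.4.6.4$;
\item a code with $9$ codewords in a fundamental domain of $28$ vertices for $3^2.4.3.4$.
\end{itemize}
For each code I would fix a period lattice and a fundamental domain, and list the codewords, either in a figure as in Figure~\ref{fig:code} or by coordinates. The density is then simply the ratio of codewords to vertices in the domain.

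Verifying that each set is an idcode follows the argument in the proof of Theorem~\ref{thm:tsq}. If $d(u,v)\ge 3$, then $N[u]\cap N[v]=\emptyset$, so $I(u)\neq I(v)$ as soon as both sets are nonempty. By periodicity, it therefore suffices to check two things for every vertex $u$ of the fundamental domain: that $I(u)\neq\emptyset$, and that $I(u)\neq I(v)$ for every $v$ with $d(u,v)\le2$. This is a finite check.

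The main obstacle is not logical but presentational. The fundamental domains are fairly large, with $28$ to $30$ vertices, and the grids are less regular than $4.8.8$, so the check has to be made transparent. I would draw each code on a figure with the dashed domain marked. I would also support the verification with the program of~\cite{rudini26idc}, and if possible describe the $I(u)$ sets by types up to the symmetries of the code to shorten the hand check.
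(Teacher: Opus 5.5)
Your lower bounds are exactly the paper's: Theorem~\ref{thm:4cycle} for $4.6.12$, and the bound $2/(\Delta+2)$ of~\cite[Theorem~4.1]{SSW24} for $3.4.6.4$ and $3^2.4.3.4$. Your reduction of the upper bounds to a finite check is also what the paper does: vertices at distance at least $3$ have disjoint closed neighborhoods, and periodicity does the rest. The gap is that each upper bound is an existence statement whose whole content is the explicit code, and you never exhibit the codes. You only state their parameters, and some of those parameters are wrong.

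For $4.6.12=\operatorname{tr}(3.6.3.6)$, a fundamental domain of $30$ vertices cannot exist.
\begin{itemize}
\item The kagome grid has $3$ vertices per cell of its translation lattice, so $4.6.12$ has $12$ vertices per cell (one $12$-gon, two hexagons and three squares).
\item A period lattice of a code is a sublattice of this translation lattice, so every fundamental domain has $12m$ vertices for some integer $m\ge 1$.
\item Density $13/30$ then requires $12m\cdot 13/30=26m/5\in\mathbb{Z}$, so $5\mid m$ and the domain has at least $60$ vertices.
\end{itemize}
The paper's code (Figure~\ref{fig:trtrihex}) has a domain of $120$ vertices, $52$ of them codewords.

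For $3^2.4.3.4$, which has $4$ vertices per cell, a $28$-vertex domain is arithmetically possible. However, nothing supports the existence of an idcode of density $9/28$ with such a small period. The paper's code (Figure~\ref{fig:snubsq}) has a $56$-vertex domain with $18$ codewords. Only your data for $3.4.6.4$ ($11$ codewords among $30$ vertices) match the paper.

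To complete the proof you must actually produce the three periodic sets, with period lattices contained in the translation lattices of the grids. Only then can you carry out the verification you describe.
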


\begin{proof}
The truncated trihexagonal grid is twin-free and cubic, and every vertex lies on exactly one $4$-cycle, so Theorem~\ref{thm:4cycle} gives the lower bound $3/7$. The rhombitrihexagonal and snub square grids are $4$-regular and $5$-regular, respectively, so the general bound $2/(\Delta+2)$ of~\cite[Theorem~4.1]{SSW24} gives $1/3$ and $2/7$. The upper bounds are given by the periodic codes shown in Figures~\ref{fig:trtrihex}, \ref{fig:rhomb} and~\ref{fig:snubsq}, which have densities $13/30$, $11/30$ and $9/28$, respectively. Since two vertices at distance at least $3$ have disjoint closed neighborhoods, by periodicity it suffices to check that $I(u)\neq\emptyset$ and $I(u)\neq I(v)$ for every vertex $u$ of a fundamental domain of the period lattice and every vertex $v$ with $d(u,v)\le 2$. This is a routine verification with Figures~\ref{fig:rhomb}, \ref{fig:trtrihex} and~\ref{fig:snubsq}.
\end{proof}

\begin{figure}[t]
\centering

\caption{An idcode of density $11/30$ of the rhombitrihexagonal grid $3.4.6.4$ (black vertices). The code is periodic, and the dashed parallelogram is a fundamental domain of its period lattice: it contains $30$ vertices, $11$ of which are codewords.}
\label{fig:rhomb}
\end{figure}

\begin{figure}[t]
\centering
%
\caption{An idcode of density $13/30$ of the truncated trihexagonal grid $4.6.12$ (black vertices). The code is periodic, and the dashed parallelogram is a fundamental domain of its period lattice: it contains $120$ vertices, $52$ of which are codewords.}
\label{fig:trtrihex}
\end{figure}

\begin{figure}[t]
\centering
%
\caption{An idcode of density $9/28$ of the snub square grid $3^2.4.3.4$ (black vertices), drawn as the square grid in which alternate squares of each row are cut by a diagonal, at $45^\circ$ in one row and at $-45^\circ$ in the next, so that every vertex lies on exactly one diagonal. The code is periodic, and each of the two dashed regions is a fundamental domain of its period lattice: it contains $56$ vertices, $18$ of which are codewords.}
\label{fig:snubsq}
\end{figure}

For the truncated trihexagonal grid, the gap is small: $3/7\approx 0.4286$ and $13/30\approx 0.4333$. For the rhombitrihexagonal and snub square grids, the lower bounds of Theorem~\ref{thm:remaining} are only the general bounds of~\cite{SSW24}. Note that the snub square grid, which is $5$-regular and whose faces are triangles and squares, has idcodes of density smaller than $1/3$, in contrast with the elongated triangular and snub hexagonal grids. Our attempts suggest that discharging rules of the kind used in this paper, in which the amount that a vertex $v$ receives from each codeword depends only on $N[v]\cap C$, are unlikely to give lower bounds larger than $1/3$ for the rhombitrihexagonal grid and $3/10$ for the snub square grid, so new ideas seem to be needed for these two grids.

\section*{Acknowledgments}

The author used the AI assistant Claude (Anthropic, Claude Opus 5.5) to draw the figures, and to check all proofs, including exhaustive computer verification of the local case analyses. With the help of linear and integer programming, it also suggested the discharging rules for the truncated square, elongated triangular and snub hexagonal grids; its computer searches produced some codes of this paper; and it generated the certificate of Section~\ref{sec:snub} and the program that verifies it~\cite{rudini26idc}. The research directions, the structure of the paper, the writing and style of the proofs and the notation are due to the author, who checked all results and takes full responsibility for the paper.

\bibliographystyle{amsplain}
\bibliography{refs}

\end{document}